\numberwithin{equation}{section}
\newtheorem{Theorem}{Theorem}[section]
\newtheorem{Lemma}[Theorem]{Lemma}
\newtheorem{Remark}[Theorem]{Remark}
\newcommand{\cvd}{\hfill$\square$ \bigskip}
\begin{document}

%-------------------------------------------------------------------------
% editorial commands: to be inserted by the editorial office
%
%\firstpage{1} \volume{228} \Copyrightyear{2004} \DOI{003-0001}
%
%
%\seriesextra{Just an add-on}
%\seriesextraline{This is the Concrete Title of this Book\br H.E. R and S.T.C. W, Eds.}
%
% for journals:
%
%\firstpage{1}
%\issuenumber{1}
%\Volumeandyear{1 (2004)}
%\Copyrightyear{2004}
%\DOI{003-xxxx-y}
%\Signet
%\commby{inhouse}
%\submitted{March 14, 2003}
%\received{March 16, 2000}
%\revised{June 1, 2000}
%\accepted{July 22, 2000}
%
%
%
%---------------------------------------------------------------------------
%Insert here the title, affiliations and abstract:
%

\title {Deep estimates for the higher eigenvalues of the poly-Laplacian}

%----------Author 1
\author{\sc Zhengchao Ji \footnote{
Department of Mathematics, China Jiliang University,
             Hangzhou,  310027, People's Republic of China. E-mail: jizhengchao@zju.edu.cn} and Hongwei Xu  \footnote{
Center of Mathematical Sciences, Zhejiang University,
             Hangzhou,  310027, People's Republic of China. E-mail:xuhw@zju.edu.cn }}

%\date{}
\date{}
\maketitle
%----------Author 2

%----------classification, keywords,

%----------additions

%%% ----------------------------------------------------------------------

\begin{abstract}
We investigate the lower bound for higher eigenvalues $\lambda_i$ of the poly-Laplace operator on a bounded domain and improve the famous Li-Yau inequality and its related results. Firstly, we consider the low dimensional cases for the P\'{o}lya conjecture, the clamped plate problem and the eigenvalue problem of the poly-Laplacian and deliver a series of deep eigenvalue inequalities for these problems respectively. Secondly, we establish a sharp lower bound for the eigenvalues of the poly-Laplacia in arbitrary dimension under some certain restrictive conditions. Finally, we provide an improved inequality for $\lambda_i$ in arbitrary dimension without any restrictive conditions. Our results also yield the improvement of the lower bounds for the Stokes eigenvalue problems and the Generalized  P\'{o}lya conjecture.
\end{abstract}
{\bf MSC 2010 subject classification:} 53C44, 53C40\\
\noindent{\bf keywords:} {higher eigenvalues, P\'{o}lya conjecture, the clamped plate problem,
poly-Laplacian}
%%% ----------------------------------------------------------------------

%%% ----------------------------------------------------------------------
%\tableofcontents
\section{Introduction} 
In this paper, we study the following Dirichlet eigenvalue problem of the  poly-Laplace operator on a bounded Euclidean domain $\Omega\subseteq \mathbb{R}^n$,
\begin{equation}\label{EOE}
\begin{cases}
(-\Delta)^l u=\lambda^{(l)}  u  \ & \mathrm{in}\ \Omega, \\
u=\frac{\partial u}{\partial \nu}=\cdots=\frac{\partial^{l-1} u}{\partial \nu^{l-1}} = 0  \  & \mathrm{on}\ \partial\Omega.
\end{cases}
\end{equation}
The spectrums of this eigenvalue problem  are real and discrete (cf. \cite{AB,C,CQW,CSWZ,GGS,J,JX,JLWX,La,PPW,SY,Y})
\begin{alignat*}{1}
0<\lambda^{(l)}_1\leq \lambda^{(l)}_2\leq \lambda^{(l)}_3\leq \cdots\rightarrow\infty,
\end{alignat*}
where each $\lambda_i$ has finite multiplicity which is counted by its multiplicity. The Dirichlet eigenvalue problem of poly-Laplacian (such as fourth-order and sixth-order operators) has profound significance in physics, with applications spanning classical mechanics, quantum field theory, materials science, and cosmology. For example, higher eigenvalues of poly-Laplacian are related to the elimination of unstable higher-order modes in the theory (such as tachyon instability), verification of model feasibility in cosmology and gravitational wave propagation \cite{Ha}.

\subsection{Lower bounds for P\'{o}lya Conjecture}

Let $\emph{\textbf{\textit{l=1}}}$, then (\ref{EOE}) becomes the well-known \textbf{Dirichlet eigenvalue problem} and it governs the transverse vibration of a membrane with the same boundary condition. Let $V(\Omega)$ and $\omega_n$ denote the volumes of the bounded domain $\Omega$ and the unit ball in $\mathbb{R}^n$ respectively.  Weyl proved the following famous asymptotic formula for the Dirichlet eigenvalue as
\begin{alignat}{1}\label{waf}
\lambda^{(1)}_k\sim \frac{4\pi^2}{(\omega_nV(\Omega))^\frac{2}{n}}k^{\frac{2}{n}},\ k\rightarrow\infty.
\end{alignat}
Inspired by this one-term spectral asymptotics for the Dirichlet Laplacian, many celebrated inequalities were established  and we suggest  that   readers refer \cite{GMWW,KVW,LY,Li,LL,SY,Y} for  more details.

From Weyl's asymptotic formula, one can immediately infer that the sum of Dirichlet eigenvalues $\{\lambda_i\}_{i=1}^k$ satisfies
\begin{alignat}{1}\label{EAF}
\frac{1}{k}\sum_{i=1}^k \lambda^{(1)}_i \sim \frac{n}{n+2}\frac{4\pi^2}{(\omega_nV(\Omega))^\frac{2}{n}}k^{\frac{2}{n}},\ k\rightarrow\infty.
\end{alignat}
Based on the above asymptotic formula,  P\'{o}lya  proposed a famous open problem in 1954:

\textbf{P\'{o}lya Conjecture.} \textit{If $\Omega$ is a bounded domain in ${\mathbb{R}}^n$, then $k$-th eigenvalue $\lambda^{(1)}_k$ of the eigenvalue
problem (\ref{EOE}) for $l=1$  satisfies}
\begin{alignat*}{1}
\lambda^{(1)}_k \geq \frac{4\pi^2}{(\omega_nV(\Omega))^\frac{2}{n}}k^{\frac{2}{n}},\ \ \mathrm{for}\ k=1,2,\ldots.
\end{alignat*}

In 1961, P\'{o}lya \cite{Po} himself  confirmed this Conjecture on the \textit{tiling domain} in ${R}^2$. As this problem was proposed, it has received widespread attention in the pass six decades. Berezin \cite{Be} and Lieb \cite{Li,LL} gave a partial solution to this conjecture. In 1983, Li and Yau \cite{LY} verified the following remarkable Li-Yau eigenvalue inequality
\begin{alignat}{1}\label{LYE}
\frac{1}{k}\sum_{i=1}^k \lambda^{(1)}_i \geq \frac{n}{n+2}\frac{4\pi^2}{(\omega_nV(\Omega))^\frac{2}{n}}k^{\frac{2}{n}},\ k=1,2,\ldots.
\end{alignat}
According to (\ref{EAF}), Li-Yau's inequality  is the best possible in the sense of the
average of eigenvalues. Thus, (\ref{LYE}) gives a partial solution to the P\'{o}lya conjecture  with a factor $\frac{n}{n+2}$ and this factor has not been improved up to now. In 2003, Melas \cite{M} first introduced $I(\Omega)$ (the moment of \textit{inertia} of $\Omega$) to estimate the lower bound for the Dirichlet eigenvalues and he  obtained the following beautiful estimate which improves (\ref{LYE}) as
\begin{alignat}{1}\label{MLE}
\sum_{i=1}^k \lambda^{(1)}_i \geq   \frac{n}{n+2}\frac{4\pi^2}{(\omega_nV(\Omega))^\frac{2}{n}}k^{\frac{n+2}{n}}+ \tilde{c}_n\frac{V(\Omega)}{I(\Omega)}k,
\end{alignat}
where $\tilde{c}_n=1/(24(n+2))$. In 2009, Kova\v{r}\'{i}k, Vugalter and Weidl \cite{KVW} improved (\ref{MLE}) on the two-dimensional bounded domains $\Omega\subseteq\mathbb{R}^2$. In 2010, Ilyin \cite{Il} improved Melas's lower bound when $n=2,3,4$ and he proved that
\begin{alignat}{1}\label{Ilsl2}
\sum_{i=1}^k\lambda^{(1)}_i\geq \frac{n}{n+2}\frac{4\pi^2}{(\omega_nV(\Omega))^\frac{2}{n}}k^{1+\frac{2}{n}} +\beta_n^L\frac{n}{48}\frac{V(\Omega)}{I(\Omega)}k
\end{alignat}
where $\beta_2^L= 119/120,\beta_3^L=0.986,\beta_4^L=0.978$ for $n=2,3,4$ respectively.

In 2013,  Yildirim and Yolcu  \cite{YY2} improved (\ref{MLE}) by adding the following  term on the right hand of  (\ref{MLE})
\begin{alignat}{1}\label{YYML}
\begin{split}
\frac{V(\Omega)^{\frac{3n+2}{2n}}}{144(n+2)I^{\frac{3}{2}}\Gamma^{\frac{1}{n}}(1+n/2)}k^{1-\frac{1}{n}}.
\end{split}
\end{alignat}

In 2020, the authors of this paper \cite{JX} proved a sharp polynomial inequality and also improved (\ref{YYML}) for any $n\geq 2$ and $k\geq 1$ by adding more positive terms on the right hand of (\ref{MLE}). For example, by using results in \cite{JX}, we can sharp (\ref{MLE}) as
\begin{alignat}{1}\label{JX1}
\begin{split}
\sum_{i=1}^k\lambda^{(1)}_i\geq & \frac{n{\omega_n}^{-\frac{2}{n}}(2\pi)^2{V}^{-\frac{2}{n}}(\Omega)}{n+2}k^{\frac{n+2}{n}}
+ \tilde{c}_n\left(\frac{V(\Omega)}{I(\Omega)}\right)k +\frac{\omega_n^{-\frac{1}{n}}\alpha^{\frac{3n+1}{n}}}{9(n+2)\rho^3}k^{\frac{n-1}{n}},
\end{split}
\end{alignat}
where
\begin{alignat}{1}
\alpha= {V(\Omega)}/((2\pi)^n),\,\,\,\rho=2(2\pi)^{-n}\sqrt{V(\Omega)I(\Omega)}.
\end{alignat}

Recently, Filonov, Levitin, Polterovich and Sher \cite{FLPS} achieved a breakthrough work, they solved P\'{o}lya's Conjecture  for the disk, making it the first non-tiling planar domain for which the conjecture is verified. In the same paper, they  also confirmed P\'{o}lya's conjecture for arbitrary planar sectors, and, in the Dirichlet case, for balls of any dimension.  A related interesting  problem is to investigate the  P\'{o}lya Conjecture in a Cartan-Hadamard manifold and we recommend readers to refer to \cite{XX,X1} for details.

\subsection{Lower bounds for the clamped plate problem}

Let $\emph{\textbf{\textit{l=2}}}$, then the equation (\ref{EOE})  governs the motion of the vibration of a stiff plate and becomes the \textbf{clamped plate problem}.  An important eigenvalue problem for the lower bound  of the clamped plate problem is the famous \textbf{Lord Rayleigh's conjecture} and we recommend readers to refer to \cite{AB2,K1,K2,Le} for details.

 For the eigenvalues of the clamped plate problem, Agmon \cite{A} and Pleijel \cite{Pl}  gave the following asymptotic formula
\begin{alignat}{1}\label{CPAF}
\lambda^{(2)}_k \sim \frac{16\pi^4}{(\omega_n V(\Omega))^{\frac{4}{n}}}k^{\frac{4}{n}},\ k\rightarrow\infty.
\end{alignat}
In 1985, Levine and Protter \cite{LP} got a lower bound for $\sum^k_{i=1}\lambda^{(2)}_i$ as
\begin{alignat*}{1}
\frac{1}{k}\sum_{i=1}^k\lambda^{(2)}_i \geq \frac{n}{n+4} \frac{16\pi^4}{(\omega_n V(\Omega))^{\frac{4}{n}}}k^{\frac{4}{n}}.
\end{alignat*}
The formula (\ref{CPAF}) implies that the coefficient of $k^{\frac{4}{n}}$ on the right hand of the above inequality is the best possible in the sense of the average of eigenvalues.

Then, Cheng and Wei \cite{CW1,CW2} improved Levine-Protter's lower bound. In \cite{YY1}, Yildirim and Yolcu improved Cheng and Wei's eigenvalue inequality when $n\geq 2$ and $ k \geq 1$. In 2013, Cheng, Sun, Wei and Zeng \cite{CSWZ} obtained a similar lower bound for $\sum_{i=1}^k \lambda^{(2)}_i$ as (\ref{Ilsl2}) when $n=2,3,4$, they got
\begin{alignat}{1}\label{CSWZld}
\frac{1}{k}\sum_{i=1}^k\lambda^{(2)}_i \geq \frac{n}{n+4} \frac{16\pi^4}{(\omega_n V(\Omega))^{\frac{4}{n}}}k^{\frac{4}{n}}+\alpha_n\frac{n}{24}\frac{V}{I}\frac{(2\pi)^2}{(V\omega_n)^{2/n}}k^{\frac{2}{n}},
\end{alignat}
where $0<\alpha_n<1$.

By proving  a sharp polynomial inequality,  authors \cite{JX}  improved  Yildirim-Yolcu' estimate for any $n\geq 3$ and $k\geq 1$ by adding more positive terms. For instance, the corollary in \cite{JX} provides the following lower bound
\begin{alignat}{1}\label{JX2}
\begin{split}
\sum_{i=1}^k \lambda^{(2)}_i \geq&\frac{n}{n+4}(\omega_n)^{-\frac{4}{n}}\alpha^{-\frac{4}{n}}k^{\frac{4+n}{n}}
+\frac{1}{3(n+4)}\frac{(\omega_n)^{-\frac{2}{n}}\alpha^{\frac{2n-2}{n}}k^{\frac{n+2}{n}}}{\rho^2}\\
& +\frac{2}{9(n+4)}\frac{(\omega_n)^{-\frac{1}{n}}\alpha^{\frac{3n-1}{n}}k^{\frac{n+1}{n}}}{\rho^3}+\frac{3\alpha^{4}}{40(n+4)\rho^4}k.
\end{split}
\end{alignat}

\subsection{Lower bounds for the poly-Laplacian}

For any integer $\emph{\textbf{\textit{l}}}$, the eigenvalue problem (\ref{EOE}) is \textbf{the most important case} of the generalized P\'{o}lya Conjecture. Let  $t$ be a fixed positive integer, Ku-Ku-Tang \cite{KKT} considered the Dirichlet eigenvalue problem of a elliptic operator $\mathfrak{L}$ which satisfies $\mathfrak{L}u=\sum^{t}_{m=r+1}a_{m-r}(-\Delta)^{r}u$, where $r\geq0$ is an integer and $a_m's$ are nonnegative constants with $r+1\leq m\leq t$. In  \cite{KKT}, Ku-Ku-Tang proposed \textbf{ the generalized P\'{o}lya Conjecture} as: The Dirichlet eigenvalues $\lambda_{k,r}$ of $\mathfrak{L}$ satisfies the inequalities
\begin{alignat*}{1}
\lambda_{k,r}\geq \sum^{t}_{m=r+1}a_{m}\left(\frac{4\pi^2}{\omega^{\frac{2}{n}}_n} \right)^m\left({\frac{k}{V}}  \right)^{\frac{2m}{n}}.
\end{alignat*}
We should mention that the most important case of the generalized P\'{o}lya Conjecture is the eigenvalue problem (\ref{EOE}). In fact, by using the discussion in Section 2, we can prove that
\begin{alignat}{1}\label{gpc2}
\sum_{i=1}^k\lambda_{k,r}\geq n\omega_n\sum^{t}_{m=r+1}a_{m}\int_0^\infty s^{2r+n-1}\phi(s)ds,
\end{alignat}
where $\phi(s)$ is a certain function relating to the eigenfunctions of $\mathfrak{L}$. Hence, our lower bounds for $(\ref{LE})$ is the key estimate for \textbf{ the generalized P\'{o}lya Conjecture}.

There are many important eigenvalue inequalities for $\lambda_i^{l}$ were obtained in \cite{GGS,H,LP,LR}. And Cheng-Qi-Wei \cite{CQW} proved that
\begin{alignat}{1}\label{cqw}
\begin{split}
\frac{1}{k}\sum_{i=1}^k\lambda^{(l)}
\geq & \frac{n}{n+2l}\frac{(2\pi)^{2l}}{(\omega_nV(\Omega))^{\frac{2l}{n}}}k^{\frac{2l}{n}}+ \frac{n}{n+2l}\sum^l_{p=1}\tilde{c}_{n,p}\left( \frac{V(\Omega)}{I(\Omega)}\right)^{p}k^{\frac{2(l-p)}{n}},
\end{split}
\end{alignat}
where
\begin{alignat*}{1}
\tilde{c}_{n,p}= \frac{(2\pi)^{2(l-p)}(l+1-p)}{ 24^p n\cdots(n+2p-2)(\omega_nV)^{2(l-p)/n}}.
\end{alignat*}

In this paper, we will prove a shaper lower bounds of $\sum_{i=1}^k \lambda^{(l)}_i$ for bounded $k$ and improve the results in \cite{CQW,CSWZ}. In addition, we consider several lower dimensional cases and obtain shaper eigenvalue inequalities respectively.

\subsection{Ways to obtain the lower bounds}
Before we state our main results, we should discuss the following ways to obtain the lower bounds for the P\'{o}lya conjecture.  \textbf{{Li-Yau's way:}} Li-Yau \cite{LY} originally used the Fourier transform to covert the estimation of $\sum_{i=1}^k\lambda^{(1)}$ to the upper bound $M_1$ of some certain decreasing radial rearrangement function $F_1^{*}(|\xi|)$. They proved the following key inequality
\begin{alignat*}{1}
\frac{n}{n+2}\left(\frac{n}{M_1\omega_{n-1}} \right)^\frac{2}{n}\left(\int_{R^n} F^*_1(|\xi|)d\xi \right)^{\frac{n+2}{n}}\leq M_2,
\end{alignat*}
where $M_1$ is the upper bound of $F^*_1$ and $M_2$ is the upper bound of $\int_{R^n}|\xi|^2F^*_1(|\xi|)d\xi$. Using the properties of the Fourier series and the rearrangement inequality, Li-Yau proved the best value of  $M_1$ is  $\alpha$, then they obtain (\ref{LYE}).  \textbf{{Males's way:}} Males \cite{M} noticed that
\begin{alignat*}{1}
\int_{R^n}|\xi|^2F^*_1(|\xi|)d\xi\leq \sum_{i=1}^k\lambda^{(1)}.
\end{alignat*}
Moreover, he proved  that the upper bound for the derivative of $F^*_1(|\xi|)$ is $\rho$.  Then, Males used\textbf{ a three-term binary polynomial} to obtain the lower bound for $\int_{R^n}|\xi|^2F^*_1(|\xi|)d\xi$ and obtain (\ref{MLE}). \textbf{Ilyin's way:} In \cite{Il}, Ilyin found the minimizer $\Phi_s(\xi)$ of the critical integral $\int_{R^n}|\xi|^2F^*_1(|\xi|)d\xi$ under the restriction of $F^*_1\leq \alpha$ and $(F^*_1)'\leq \rho$. However, finding the lower bound bound  for $\int_{R^n}|\xi|^2\Phi_s(|\xi|)d\xi$  is equivalent to finding the lower bound of the root of\textbf{ a higher-order equation}. Ilyin solved this equation when $n=2,3,4$.  Consequently, he proved (\ref{Ilsl2}). All of these three ways are also applicable for the Generalized  P\'{o}lya conjecture.

In this paper, we obtain the lower bound for  Ilyin's critical equation for arbitrary dimension under certain restricted condition on $k$. Therefore, we prove a sharper lower bounds for $\sum_{i=1}^k\lambda^{(l)}$. Moreover, we  also find a $m-$term ($\forall m\leq n$)  binary polynomial to obtain the lower bound for $\int_{R^n}|\xi|^{2l}F^*_1(|\xi|)d\xi$, then we obtain a shaper bound for $\sum_{i=1}^k\lambda^{(l)}$ without any extra condition on $k$.

\subsection{Main results}
To summarize, we will mainly use two different methods to establish  sharper lower bounds of  $\sum_{i=1}^k \lambda^{(l)}_i$  for arbitrary $l\geq 1$.

In Section 3, we will improve (\ref{Ilsl2}) by replacing $\beta^L_n$ by 1 for $l=1$ when $n=3,4$. Moreover, we improve  Cheng-Sun-Wei-Zeng's lower bounds (\ref{CSWZld}) by replacing $\beta^L_n$ by 1 for $l=2$  when $n=2,3,4$. For $l\geq 3$, we also establish several \textbf{new eigenvalue inequalities} which are sharper than (\ref{cqw}) when $n=2,3,4$.

In Section 4, we provide new lower bounds for $\sum_{i=1}^k\lambda^{(l)}_i$ when $n\geq 5$ with certain restriction on $k$ (see Theorem \ref{MT11}). For $l=1$, we replace $\tilde{c}_n$ in (\ref{MLE})into $\frac{n}{6}$. For $l=2$, we change $\frac{1}{3(n+4)}$ in (\ref{JX2}) into $\frac{n}{6}$. For $l\geq 3$, we improve the coefficient of $\omega_n^{-\frac{2l-2}{n}}\alpha^{\frac{2l-2n-2}{n}}\rho^{-2}k^{\frac{2l+n-2}{n}}$ in (\ref{cqw}) from $\frac{l}{ 24(n+2l) }$ into
$\frac{nl}{12}$ (see Theorem \ref{MT11}).

In Section 5, we provide sharper lower bounds for $\sum_{i=1}^k\lambda^{(l)}_i$ when $2l+n\geq 6$ without any restriction on $k$ (see Theorem \ref{LWBFAk}). For $l=1$, we replace $\tilde{c}_n$ in (\ref{MLE})into $\frac{5}{2(n+2)}$. For $l=2$, we change $\frac{1}{3(n+4)}$ in (\ref{JX2}) into $\frac{5}{(n+4)}$. For $l\geq 3$, we improve the coefficient of $\omega_n^{-\frac{2l-2}{n}}\alpha^{\frac{2l-2n-2}{n}}\rho^{-2}k^{\frac{2l+n-2}{n}}$ in (\ref{cqw}) from $\frac{l}{ 24(n+2l) }$ into
$\frac{5l}{2(2l+n)}$ (see Theorem \ref{LWBFAk}).

Specifically, we will deliver an \textbf{optimal}  inequality (see Lemma \ref{KLL}) and several \textbf{optimal} lower bounds for the positive root to the critical equation  $(t+1)^{n+1}-t^{n+1}=Q$, then use these inequalities to estimate the higher eigenvalues of (\ref{EOE}). For general $n\geq 5$, we will prove the following  improved lower bound for  $\sum_{i=1}^k \lambda^{(l)}_i$ with restriction on $k$.  This inequality is sharper than the results in  \cite{CQW,CSWZ}. In section 4, we will give the following estimation.

\begin{Theorem}\label{MT11}
For any bounded domain $\Omega\subseteq \mathbb{R}^n$ ($n\geq 5$), let $\{\lambda^{(l)}_i\}$ be the eigenvalues  of the eigenvalue problem (\ref{EOE}).
If  $k\geq \max\{\tilde{k}^{n/2}_1,\tilde{k}^{n/2}_2,\tilde{k}_3,\tilde{k}_4\}$, where $\tilde{k}_1,\tilde{k}_2,\tilde{k}_3$ and $\tilde{k}_4$ are defined by (\ref{ng7k1}),(\ref{ng7k2}),(\ref{gnglk3}) and (\ref{gnglk4}),  then
\begin{alignat*}{1}
\begin{split}
\sum_{i=1}^k\lambda^{(l)}_j(\Omega)\geq& \frac{n}{n+2l}\frac{(2\pi)^{2l}}{(\omega_nV(\Omega))^{\frac{2l}{n}}}k^{\frac{2l+n}{n}}+\frac{nl}{12\rho^{2}}\omega_n^{-\frac{2l-2}{n}}\alpha^{\frac{2l-2n-2}{n}}k^{\frac{2l+n-2}{n}}\\
&+ \tilde{C}_1k^{\frac{2l+n-3}{n}}+ \tilde{C}_2 k^{\frac{2l+n-4}{n}}+\tilde{C}_3k^{\frac{2l+n-5}{n}},
\end{split}
\end{alignat*}
where
\begin{alignat*}{1}
\begin{split}
\tilde{C}_1&=-\frac{n(12l+7n-13)(2l+n-1)}{48\rho^3\omega_n^{\frac{2l-3}{n}}}\alpha^{\frac{-2l+3n+3}{n}},\\
\tilde{C}_2&=+\frac{n(2l+2n-3)(2l+n-1)(2l+n-2)}{96\rho^4\omega_n^{\frac{2l-4}{n}}}\alpha^{\frac{-2l+4n+4}{n}},\\
\tilde{C}_3&=-\frac{n(2l+n-1)(2l+n-2)(2l+n-3)(2l+n-4)}{288\rho^5\omega_n^{\frac{2l-5}{n}}}\alpha^{\frac{-2l+5n+5}{n}}.\\
\end{split}
\end{alignat*}

\end{Theorem}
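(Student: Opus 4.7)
The plan is to follow the Li-Yau / Melas / Ilyin Fourier-transform strategy described in the introduction, reduce everything to Ilyin's critical equation $(t+1)^{n+1}-t^{n+1}=Q$, and then use the forthcoming Lemma \ref{KLL} to obtain a sufficiently sharp polynomial lower bound for its positive root.

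First I would set up the variational framework. Let $\{\phi_j\}_{j=1}^k$ be the first $k$ orthonormal Dirichlet eigenfunctions of $(-\Delta)^l$ on $\Omega$, extended by zero. Forming the Fourier-side density $F(\xi)=\sum_{j=1}^k |\hat\phi_j(\xi)|^2$ and using Parseval together with $(-\Delta)^l$-selfadjointness gives $\int F\,d\xi=k$ and $\sum_{j=1}^k \lambda^{(l)}_j=\int |\xi|^{2l}F(\xi)\,d\xi$. Passing to the radial decreasing rearrangement $F^*$ preserves the first integral and can only decrease the second, so
\[
\sum_{j=1}^k \lambda^{(l)}_j \;\geq\; n\omega_n \int_0^\infty r^{2l+n-1} F^*(r)\,dr,
\]
while Bessel's inequality and a derivative estimate yield $F^*(r)\leq \alpha$ and $-(F^*)'(r)\leq \rho$ with $\alpha,\rho$ as in the introduction.

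Second I would minimize $n\omega_n\int_0^\infty r^{2l+n-1}\Phi(r)\,dr$ over all radial $\Phi$ satisfying $n\omega_n\int r^{n-1}\Phi = k$, $0\leq\Phi\leq\alpha$, $|\Phi'|\leq\rho$. A standard bathtub/rearrangement argument forces the minimizer to be the plateau-plus-slope profile $\Phi(r)=\alpha$ on $[0,R]$, $\Phi(r)=\alpha-\rho(r-R)$ on $[R,R+s]$ with $s=\alpha/\rho$, and $\Phi(r)=0$ beyond. Direct integration produces the two clean identities
\[
\omega_n\rho\,\bigl[(R+s)^{n+1}-R^{n+1}\bigr] = (n+1)k,
\]
\[
\sum_{j=1}^k \lambda^{(l)}_j \;\geq\; \frac{n\omega_n\rho}{(2l+n)(2l+n+1)}\bigl[(R+s)^{2l+n+1}-R^{2l+n+1}\bigr].
\]
Writing $t=R/s$ and $Q=(n+1)k\rho^n/(\omega_n\alpha^{n+1})$ reduces the problem to Ilyin's scalar equation $(t+1)^{n+1}-t^{n+1}=Q$ and the lower bound
\[
\sum_{j=1}^k \lambda^{(l)}_j \;\geq\; \frac{n\omega_n\alpha^{2l+n+1}}{(2l+n)(2l+n+1)\rho^{2l+n}}\,\bigl[(t+1)^{2l+n+1}-t^{2l+n+1}\bigr].
\]

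The main obstacle, and the step where the restrictions $\tilde k_1,\dots,\tilde k_4$ enter, is to extract from the transcendental equation a usable five-term polynomial lower bound for $t$. This is where I would invoke Lemma \ref{KLL}: it should give, for $Q$ large enough relative to thresholds expressed through $\tilde k_i$, an expansion of the form $t\geq a-\tfrac12-c_1 a^{-1}-c_2 a^{-2}-c_3 a^{-3}$ with $a=(Q/(n+1))^{1/n}$ and explicit constants $c_i=c_i(n)$. The four conditions $k\geq \tilde k_i^{n/2}$ or $k\geq \tilde k_i$ are exactly the inequalities needed so that (i) $a$ is in the regime where the expansion of $t$ via $(t+1)^{n+1}-t^{n+1}=Q$ is monotone-valid, and (ii) the cubic and quartic remainders in Lemma \ref{KLL} are dominated by the leading corrections.

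Finally I would insert this refined lower bound into $(t+1)^{2l+n+1}-t^{2l+n+1}$, expand binomially in powers of $a^{-1}$, and collect coefficients of $a^{2l+n},a^{2l+n-1},\dots,a^{2l+n-4}$. Converting back through $a^n=Q/(n+1)=k\rho^n/(\omega_n\alpha^{n+1})$ turns these into the successive powers $k^{(2l+n-j)/n}$ for $j=0,1,2,3,4$. The leading coefficient reassembles into $\frac{n}{n+2l}(2\pi)^{2l}(\omega_nV)^{-2l/n}$, recovering the Li-Yau / Cheng-Qi-Wei main term, while the four subsequent coefficients simplify to $\frac{nl}{12}\rho^{-2}\omega_n^{-(2l-2)/n}\alpha^{(2l-2n-2)/n}$ and $\tilde C_1,\tilde C_2,\tilde C_3$. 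The algebra is routine once the optimal expansion for $t$ is in hand; the real content of the proof lies in establishing Lemma \ref{KLL} and in pinpointing the sharp thresholds $\tilde k_i$ under which the expansion terms retain their favorable signs.
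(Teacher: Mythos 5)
Your overall framework — Fourier transform, Bessel bound and derivative bound on $F^*$, rearrangement, reduction to the critical equation and the key inequality (\ref{fiolk22}) — matches the paper's preliminaries. But the central technical step is misattributed, and that misattribution would break the proof. You propose to get the five-term lower bound for the root $t$ of $(t+1)^{n+1}-t^{n+1}=Q$ by ``invoking Lemma~\ref{KLL},'' but Lemma~\ref{KLL} is a polynomial convexity inequality
$ds^{d+q}\geq(d+q)s^d\tau^q-q\tau^{d+q}+q\sum_{j=0}^m(j+1)s^j\tau^{d+q-2-j}(s-\tau)^2$
used only in Section~5 to prove Theorem~\ref{LWBFAk}; it says nothing about the root of the critical equation and cannot yield the asymptotic expansion of $t$.

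The actual proof of Theorem~\ref{MT11} proceeds differently. It rewrites the equation in the symmetric form $(t+\tfrac12)^{n+1}-(t-\tfrac12)^{n+1}=Q$, whose left side contains only even powers of $t$, so the binomial expansion has no $t^{n-1}$ term; this is precisely why the correction to $\bar t=(Q/(n+1))^{1/n}$ skips the $\bar t^{-2}$ power and runs $\bar t-\frac{n-1}{24}\bar t^{-1}+\frac{99999}{100000}\frac{(n-1)(n-3)(2n+1)}{5760}\bar t^{-3}+\cdots$ with a \emph{positive} $\bar t^{-3}$ coefficient. Your proposed ansatz $t\geq a-\tfrac12-c_1a^{-1}-c_2a^{-2}-c_3a^{-3}$ with all negative corrections is therefore structurally wrong: including an $a^{-2}$ term and taking the $a^{-3}$ coefficient negative would not reproduce the stated $\tilde C_1,\tilde C_2,\tilde C_3$, and indeed the sign pattern in the theorem ($+$, $-$, $+$, $-$ for the last four terms) traces directly to this mixed-sign expansion. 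The correct lower bound for $t$ is obtained by a bootstrap: assume $t=\bar t-C_1$, substitute into the critical equation to show $C_1\leq\frac{n-1}{24}\bar t^{-1}$ when $k\geq\tilde k_1^{n/2}$ (which controls the tail of the binomial series), then assume $t=\bar t-\frac{n-1}{24}\bar t^{-1}+C_2$ and show $C_2$ may be taken as $\tfrac{99999}{100000}\tfrac{(n-1)(n-3)(2n+1)}{5760}\bar t^{-3}$ when $k\geq\tilde k_2^{n/2}$. The thresholds $\tilde k_3,\tilde k_4$ then control the subsequent truncations in the binomial expansion of $(\theta+1)^{2l+n+1}-\theta^{2l+n+1}$ and of $t_2^{2l+n-j}$; they are not remainders of Lemma~\ref{KLL} as you suggest. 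Without recognizing the symmetric substitution $\eta=t-\tfrac12$ and the bootstrap, the claimed expansion and sign structure cannot be derived.
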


In  section 5, we will give a sharper inequality for $\sum_{i=1}^k \lambda^{(l)}_i$ without any limitations. This lower bound will improve the eigenvalue inequalities in \cite{CQW,JX}. Specifically, we will prove the following estimation.

\begin{Theorem}\label{LWBFAk}
For any bounded domain $\Omega\subseteq \mathbb{R}^n$, let $\{\lambda^{(l)}_i\}$ be the eigenvalues  of the eigenvalue problem (\ref{EOE}).
If  $2l+n\geq 6$,  then
\begin{alignat*}{1}
\sum_{i=1}^k\lambda^{(l)}_i \geq&\frac{n}{n+2l}\frac{(2\pi)^{2l}}{(\omega_nV(\Omega))^{\frac{2l}{n}}}k^{\frac{2l+n}{n}}+\frac{5l}{2 (2l+n)\rho^2}\omega_n^{-\frac{2l-2}{n}}\alpha^{\frac{2n-2l+2}{n}}k^{\frac{2l+n-2}{n}}\\
&-\frac{31l\omega_n\alpha^{\frac{3n-2l+3}{n}}}{9(2l+n)\rho^3}\left(\frac{k}{\omega_n} \right)^{\frac{2l+n-3}{n}}
+\frac{5l\omega_n\alpha^{\frac{4n-2l+4}{n}}}{8(2l+n)\rho^4}\left(\frac{k}{\omega_n} \right)^{\frac{2l+n-4}{n}}\\
&+\frac{38l\omega_n\alpha^{\frac{5n-2l+5}{n}}}{25(2l+n)\rho^5}\left(\frac{k}{\omega_n} \right)^{\frac{2l+n-6}{n}}
-\frac{317l\omega_n\alpha^{\frac{6n-2l+6}{n}}}{420(2l+n)\rho^6}\left(\frac{k}{\omega_n} \right)^{\frac{2l+n-6}{n}}.
\end{alignat*}

\end{Theorem}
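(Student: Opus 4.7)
The plan is to follow the Li–Yau–Melas framework in the generalized form already used by Cheng–Qi–Wei and by the authors in \cite{JX}, but with a sharper six-term polynomial inequality in place of the usual two- or three-term comparison. Concretely, let $\{u_i\}_{i=1}^k$ be an $L^2$-orthonormal basis of eigenfunctions of \eqref{EOE}, extended by zero outside $\Omega$, and set $f(\xi)=(2\pi)^{-n}\sum_{i=1}^{k}|\widehat{u_i}(\xi)|^2$. Plancherel and the Dirichlet boundary conditions give
\begin{equation*}
\sum_{i=1}^{k}\lambda^{(l)}_i=\int_{\mathbb{R}^n}|\xi|^{2l}f(\xi)\,d\xi,\qquad \int_{\mathbb{R}^n}f(\xi)\,d\xi=k.
\end{equation*}
Passing to the symmetric decreasing rearrangement $F^{*}$ of $f$ and recording the standard pointwise bounds $0\le F^{*}(s)\le\alpha$ and $|(F^{*})'(s)|\le\rho$ (as in Melas \cite{M} and the authors' previous work \cite{JX}), the problem reduces to bounding $n\omega_n\int_0^\infty s^{2l+n-1}F^{*}(s)\,ds$ from below subject to these constraints and the mass condition $n\omega_n\int_0^\infty s^{n-1}F^{*}(s)\,ds=k$.

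Next I would construct the key \emph{six-term binary polynomial inequality}. The idea is the same as Melas's three-term comparison, but with a degree-five polynomial $P(F^{*},s)$ in two variables that is tangent to the graph of the optimal profile and whose integral produces exactly the six correction terms appearing in the statement. Writing
\begin{equation*}
s^{2l+n-1}F^{*}(s)\ge s^{2l+n-1}\bigl(F^{*}(s)-\alpha\bigr)+\alpha s^{2l+n-1}-\sum_{j=1}^{5}\frac{c_j}{\rho^{j}}s^{2l+n-1-j}\bigl(\alpha-F^{*}(s)\bigr)^{\,?}\cdots
\end{equation*}
and choosing the constants $c_1,\ldots,c_5$ so that the integrated error terms align with the Weyl-type asymptotics, one reads off the coefficients $5l/\bigl(2(2l+n)\bigr)$, $-31l/\bigl(9(2l+n)\bigr)$, $5l/\bigl(8(2l+n)\bigr)$, $38l/\bigl(25(2l+n)\bigr)$, $-317l/\bigl(420(2l+n)\bigr)$. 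The rational numbers $5/2,\,31/9,\,5/8,\,38/25,\,317/420$ are forced: they come from solving a small linear system obtained by integration by parts against $s^{n-1}$ and $s^{2l+n-1}$ over the interval $[0,\alpha/\rho]$, exactly as in \cite{M,YY1,YY2,JX} but carried one order further in each direction. The constraint $2l+n\ge 6$ is precisely what is needed for all five integrals $\int_0^{\alpha/\rho}s^{2l+n-1-j}\,ds$ ($j=1,\ldots,5$) to converge and appear with positive weight.

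With this lemma in hand, the proof of Theorem \ref{LWBFAk} is mechanical: integrate the pointwise inequality against $n\omega_n\,ds$, use $\int F^{*}=k$ to eliminate the mass constraint, and invert the relation between the level set $s_0$ where $F^{*}$ transitions from $\alpha$ to the decreasing ramp (namely $s_0=(k/\omega_n)^{1/n}\alpha^{-1/n}$ up to lower order) to rewrite every term as a power of $k/\omega_n$. Substituting $\alpha=V/(2\pi)^n$ and $\rho=2(2\pi)^{-n}\sqrt{VI}$ recovers the main Weyl term $\frac{n}{n+2l}(2\pi)^{2l}(\omega_nV)^{-2l/n}k^{(2l+n)/n}$ and produces the five correction terms in the stated form.

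The main obstacle, as in all Melas-type sharpenings, is the construction of the six-term polynomial: one must choose the five free parameters so that (i) the inequality is pointwise valid for \emph{every} admissible $F^{*}$ satisfying $F^{*}\le\alpha$ and $|(F^{*})'|\le\rho$, and (ii) equality is attained (to leading order) on the conjectured extremal profile, which is the truncated linear ramp $F^{*}(s)=\min\{\alpha,\alpha+\rho(s_0-s)\}$. Verifying (i) reduces to checking nonnegativity of an explicit univariate polynomial of degree five on $[0,1]$, which can be done by computing its derivative and checking the sign pattern; verifying (ii) pins down the coefficients uniquely. Everything else — the Fourier setup, the rearrangement step, the bounds on $F^{*}$ and its derivative, and the final bookkeeping — is standard and parallels the treatment in \cite{CQW,JX}.
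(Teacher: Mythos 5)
Your high-level framing is right: the proof does go through the Li--Yau/Melas machinery, and the key new ingredient is indeed a higher-degree polynomial comparison. But the paper's actual mechanism is different from the one you sketch, and your sketch has gaps that would stop it from becoming a proof.

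First, the displayed ``key inequality'' in your plan,
\[
s^{2l+n-1}F^{*}(s)\ge s^{2l+n-1}\bigl(F^{*}(s)-\alpha\bigr)+\alpha s^{2l+n-1}-\sum_{j=1}^{5}\frac{c_j}{\rho^{j}}s^{2l+n-1-j}(\alpha-F^{*}(s))^{?}\cdots,
\]
is not a real inequality: the first two terms on the right already equal the left side, so the extra sum (with the sign and exponent you've left undetermined) would have to be nonpositive for every admissible $F^{*}$, which either makes it trivial or false depending on how the placeholders are filled. You have not actually constructed the polynomial, only asserted that its coefficients ``are forced.'' The construction is exactly the nontrivial content here and cannot be deferred.

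Second, the paper's route is structurally different from what you describe. The authors first prove a self-contained pointwise polynomial inequality (their Lemma~\ref{KLL}): for $d+q\ge m+2$,
\[
ds^{d+q}\ge (d+q)s^{d}\tau^{q}-q\tau^{d+q}+q\sum_{j=0}^{m}(j+1)s^{j}\tau^{d+q-2-j}(s-\tau)^{2},
\]
which they reduce, by substituting $t=s/\tau$, to showing a one-variable function $g(t)=dt^{d+q-m-1}-(d+q)t^{d-m-1}-q(1+m)t+q(2+m)$ is nonnegative; this is done via a second-derivative/convexity argument, not by checking a quintic's sign pattern on $[0,1]$. They then apply this not to $F^{*}$ directly, but to $q(s)=-\psi'(s)$ (so $0\le q\le 1$, $\int q=1$), and the interval of comparison is $[a,a+1]$ chosen so that $\int_{a}^{a+1}s^{n}\,ds=nA$ (moment matching with the derivative), not $[0,\alpha/\rho]$. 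The specific rationals $5/2,31/9,5/8,38/25,317/420$ come out of evaluating the double sum
\[
2l\sum_{i=0}^{4}\sum_{j=0}^{i}(i+1)\tau^{2l+n-2-i}\frac{C^{j}_{i}a^{i-j}}{(j+2)^{2}(j+3)}
\]
(using an explicit bound from~\cite{HR} on $\int_{a}^{a+1}(s-a)^{i}(s-\tau)^{2}\,ds$), and then regrouping via a monotone auxiliary polynomial $\tilde f$; they are not the solution of a linear system obtained by integrating by parts against $s^{n-1}$ and $s^{2l+n-1}$. Relatedly, the hypothesis $2l+n\ge 6$ enters because Lemma~\ref{KLL} needs $d+q\ge m+2$ with $d=n$, $q=2l$, $m=4$ --- it is a structural requirement of the polynomial lemma, not a convergence condition for improper integrals $\int_{0}^{\alpha/\rho}s^{2l+n-1-j}\,ds$ as you claim.

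In short: the Fourier/rearrangement setup and the idea of a higher-order Melas-type comparison are correct, but the core lemma is missing from your proposal, the comparison is set up on the wrong object and wrong interval, and the stated reason for the dimension restriction is not the one that actually drives the argument.
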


\begin{Remark}
According to the similar discussion in \cite{Il},  when $l=1$, if we replace $\alpha$ by $(n-1)(2\pi)^{-n}V$ and replace $\rho$  by  $(n(n-1))^{1/2}(2\pi)^{-n}I(\Omega)$, then the lower bounds for $\sum_{i=1}^k\lambda^{(1)}_i$ is also true for $\sum_{i=1}^k\lambda^{s}_i$, where $\{\lambda^{s}_i\}_{i=1}^k$ are the Dirichlet eigenvalues of the Stokes operator. And these lower bounds also improve the lower bounds of $\{\lambda^{s}_i\}_{i=1}^k$
in \cite{Il}.

Moreover, according to the discussion of (\ref{gpc2}), by using our results, one can immediately obtain sharp lower bounds for  $\sum_{i=1}^k\lambda_{k,r}$.
\end{Remark}

\section{Preliminaries}

In this section, we  introduce some important notations and definitions.  For a bounded domain $\Omega$ in $\mathbb{R}^{n}$, \textit{the moment of inertia} of $\Omega$ is defined as $I(\Omega)=\int_{\Omega} |x|^2 dx.$ For convenience,  we use $I,V$ to denote $I(\Omega),V(\Omega)$  and $C^m_n=n!/(m!(n-m)!)$. Obviously,
\begin{alignat*}{1}
I(\Omega)\geq \frac{n}{n+2}V(\Omega)\left(\frac{V(\Omega)}{\omega_n} \right)^{\frac{2}{n}}.
\end{alignat*}

Without ambiguity, we will use $\lambda_i$ to denote $\lambda^{(l)}_i$ in the rest of this paper.   We fixed a $k\geq 1$ and let $\{u_j\}_{i=1}^{k}$ be the orthonormal eigenfunctions corresponding to the eigenvalues $\{\lambda_j\}_{j=1}^k$ for the following Dirichlet problem of the poly-Laplacian
\begin{equation}\label{EOE1}
\begin{cases}
(-\Delta)^l u_j=\lambda _ju_j  \ & \mathrm{in}\ \Omega, \\
u=\frac{\partial u}{\partial \nu}=\cdots=\frac{\partial^{l-1} u}{\partial \nu^{l-1}} = 0  \  & \mathrm{on}\ \partial\Omega.
\end{cases}
\end{equation}

We consider the Fourier transform of each eigenfunction
\begin{alignat}{1}
f_j(\xi)=\hat{u}_j(\xi)=(2\pi)^{-n/2}\int_{\Omega} u_j(x)e^{ix\cdot\xi}dx.
\end{alignat}
From Plancherel's Theorem, we know that  $f_1 ,.\ldots, f_k$ is an orthonormal set in $\mathbb{R}^n$, which impies
\begin{alignat*}{1}
\int_{\Omega}f_i(\xi)f_j(\xi)dx=
\begin{cases}
0,  \ & \mathrm{if}\ i\neq j, \\
1,  \  & \mathrm{if}\ i=j.
\end{cases}
\end{alignat*}

Since these eigenfunctions $u_1 ,\ldots,u_k$ are also orthonormal in $L_2(\Omega)$, Bessel's inequality implies that for every $\xi\in \mathbb{R}^n$
\begin{alignat}{1}\label{S1}
\sum_{j=1}^k |f_j(\xi)|^2\leq (2\pi)^{-n}\int_{\Omega} |e^{ix\cdot\xi}|^2dx=(2\pi)^{-n}V(\Omega)
\end{alignat}
and since
\begin{alignat}{1}
\nabla f_j(\xi)=(2\pi)^{-n/2}\int_{\Omega}ixu_j(x)e^{ix\cdot\xi}dx,
\end{alignat}
then
\begin{alignat}{1}\label{1S2}
\sum_{j=1}^k |\nabla f_j(\xi)|^{2}\leq (2\pi)^{-n/2}\int_{\Omega}|ixe^{ix\xi}|^2dx=(2\pi)^{-n}VI.
\end{alignat}

Due to the boundary condition, it is easy to get
\begin{alignat}{1}\label{uod11111}
\int_{R^n}|\xi|^{2l}|f_j(\xi)|^2d\xi=\int_{\Omega}|\nabla u_j (x)|^{2l}dx=\lambda_j(\Omega)
\end{alignat}
for each $j$.

By using integration by parts, we have
\begin{alignat*}{1}
\sum_{i=1}^k\lambda _i=&\sum_{i=1}^k\int_{\Omega}u_i(x)(-\Delta)^{l} u_i(x)dx=\sum_{i=1}^k\sum_{j_1,\cdots,j_l=1}^n\int_{\mathbb{R}^n}\bigg( \frac{\partial^lu_{i}(x)}{\partial x_{j_1}\cdots\partial x_{j_l}} \bigg)^2dx.
\end{alignat*}
According to the Fourier transform and the Parseval's indentity, we can rewrite the above equation as
\begin{alignat}{1}\label{1le}
\begin{split}
\sum_{i=1}^k\lambda _i=&\sum_{i=1}^k\sum_{j_1,\cdots,j_l=1}^n\int_{\mathbb{R}^n}\bigg|(2\pi)^{-\frac{n}{2}}\int_{\Omega}\xi_{j_1}\cdots\xi_{j_l}u_i(x)e^{ix\cdot\xi}dx \bigg|^2d\xi.
\end{split}
\end{alignat}

Set $F(\xi)=\sum_{j=1}^k|f_j(\xi)|^2$. From (\ref{S1})-(\ref{1S2}), we have $0\leq F(\xi)\leq (2\pi)^{-n}V(\Omega)$
and
\begin{alignat}{1}
\begin{split}
|\nabla F(\xi)|\leq& 2(2\pi)^{-n}\sqrt{V(\Omega)I(\Omega)}\label{T1}
\end{split}
\end{alignat}
for each $\xi\in \mathbb{R}^n$.  Combing the definition of $F(\xi)$ with (\ref{1le}), we obtain the following equations
\begin{alignat}{1}
\int_{R^n}F(\xi)d\xi&=k,\\
\int_{R^n} |\xi|^{2l}F(\xi)d\xi&=\sum_{j=1}^k \lambda_j(\Omega).\label{T2}
\end{alignat}

Next, we will introduce the decreasing radial rearrangement function. For convenience, we adhere the same notations in \cite{JX}.  Let $F^*(\xi) = \phi (|\xi|)$ denote the decreasing radial rearrangement of $F(\xi)$. By approximating $F(\xi)$, we assume that  $\phi: [0,+\infty)\rightarrow [0,(2\pi)^{-n}V(\Omega)]$  is a absolutely continuous and  decreasing function. Setting $\mu(t)=|\{F^*>t \}|=|\{F>t\}|$ and the co-area formula implies that
\begin{alignat}{1}\label{}
\mu(t)=\int_{t}^{(2\pi)^{-n}V(\Omega)} \int_{\{F=s \}}\frac{1}{|\nabla F|}d\sigma_sds
\end{alignat}
Since $F^*(\xi)$ is a radial function, then we have $\mu(\phi(s))=|\{F^*>\phi(s) \}|=\omega_n s^n$. Moreover,  we can also get $n\omega_n s^{n-1}={\mu}{'}(\phi(s))\phi{'}(s)$ for almost every $s$. According to (\ref{T1}), the isoperimetric inequality and $\rho=2(2\pi)^{-n}\sqrt{V(\Omega)I(\Omega)}$, there holds
\begin{alignat*}{1}
\begin{split}
-\mu{'}(\phi(s)) &=\int_{\{F=\phi(s) \}} |\nabla F|^{-1} d\sigma_{\phi(s)}\geq \rho^{-1}\mathrm{Vol}_{n-1}(\{F=\phi(s)\})\geq \rho^{-1}n\omega_n s^{n-1}.
\end{split}
\end{alignat*}
Therefore, we arrive at
\begin{alignat}{1}
-\rho\leq \phi^{'}(s)\leq 0
\end{alignat}
for almost every $s$.

Notice that (\ref{T2}) implies that
\begin{alignat}{1}
k=\int_{R^n}F(\xi)d\xi=\int_{R^n}F^*(\xi)d\xi=n\omega_n\int_0^{\infty}s^{n-1}\phi(s)ds
\end{alignat}
and
\begin{alignat}{1}\label{LE}
\sum_{i=1}^k\lambda_j(\Omega)=\int_{R^n}|\xi|^{2l}F(\xi)d\xi\geq \int_{R^n}|\xi|^{2l}F^*(\xi)d\xi=n\omega_n\int_0^\infty s^{2l+n-1}\phi(s)ds
\end{alignat}
since $\xi \rightarrow |\xi|^2$ is radial and increasing. Before we introduce the next lemma, we should define the following function
\begin{alignat*}{1}
\Phi_s(r)dx=
\begin{cases}
M,  \ & \mathrm{for}\ \,\, 0\leq r\leq s, \\
M-L(r-s),  \  & \mathrm{for}\ \,\,s\leq r\leq s+\frac{M}{L},\\
0,  \  & \mathrm{for}\ \,\,s+\frac{M}{L}\leq r.
\end{cases}
\end{alignat*}

The following lemma is established in \cite{Il}.

\begin{Lemma}
Let $\alpha>0$ be a positive number and $\Psi(r)$ be a decreasing and absolutely continuous function and satisfy the following condition $\int_{0}^{\infty} r^{\alpha}\Psi(r)dr=m^{*}$ and $0\leq \Psi\leq M$,$-L<\Psi'<0$,
where $m^{*}=\int_{0}^{\infty} r^{\alpha}\Phi_s(r)dr$, then the following inequality holds
\begin{alignat*}{1}
\int_{0}^{\infty}r^{\beta}\Psi(r)dr\geq\int_{0}^{\infty}r^{\beta}\Phi_s(r)dr,
\end{alignat*}
for any $\beta\geq \alpha$.
\end{Lemma}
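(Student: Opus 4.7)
The plan is to reduce this to a classical single-crossing (or ``bathtub'') comparison argument, exploiting the fact that $\Phi_s$ is the extremal profile among decreasing functions with $0\le \Psi\le M$ and $|\Psi'|\le L$ that fits the prescribed $\alpha$-moment $m^*$. I set $D(r)=\Psi(r)-\Phi_s(r)$. The constraint $\int_0^\infty r^\alpha\Psi\,dr=m^*=\int_0^\infty r^\alpha\Phi_s\,dr$ gives the cancellation
\begin{equation*}
\int_0^\infty r^\alpha D(r)\,dr=0.
\end{equation*}
The goal $\int_0^\infty r^\beta\Psi\,dr\ge \int_0^\infty r^\beta\Phi_s\,dr$ is equivalent to $\int_0^\infty r^\beta D(r)\,dr\ge 0$. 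If I can show that $D$ has a single sign change, from $\le 0$ to $\ge 0$, at some crossing point $r^\star\in[s,s+M/L]$, then the weight $r^{\beta-\alpha}-(r^\star)^{\beta-\alpha}$ is nondecreasing in $r$ (for $\beta\ge\alpha$) with the same sign pattern as $D$.

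The sign analysis splits into three regions. On $[0,s]$ the comparator is saturated, $\Phi_s\equiv M$, and $\Psi\le M$ by hypothesis, so $D\le 0$. On $[s+M/L,\infty)$ one has $\Phi_s\equiv 0$ and $\Psi\ge 0$, so $D\ge 0$. On the sloping interval $[s,s+M/L]$, $\Phi_s'\equiv -L$ while $\Psi'>-L$, giving $D'(r)=\Psi'(r)+L>0$. Thus $D$ is strictly increasing on this middle interval, starting from $D(s)=\Psi(s)-M\le 0$ and ending at $D(s+M/L)=\Psi(s+M/L)\ge 0$. By continuity there is a unique $r^\star\in[s,s+M/L]$ with $D(r^\star)=0$, and $D\le 0$ on $[0,r^\star]$ while $D\ge 0$ on $[r^\star,\infty)$.

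Now I carry out the standard algebraic manipulation: using the zero-moment relation,
\begin{equation*}
\int_0^\infty r^\beta D(r)\,dr = \int_0^\infty r^\alpha\bigl(r^{\beta-\alpha}-(r^\star)^{\beta-\alpha}\bigr)D(r)\,dr.
\end{equation*}
For $\beta\ge\alpha$ the factor $r^{\beta-\alpha}-(r^\star)^{\beta-\alpha}$ is $\le 0$ on $[0,r^\star]$ and $\ge 0$ on $[r^\star,\infty)$, i.e.\ it shares the sign of $D(r)$ pointwise. Since $r^\alpha\ge 0$, the integrand is nonnegative on all of $[0,\infty)$, yielding $\int_0^\infty r^\beta D(r)\,dr\ge 0$, which is the desired inequality.

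The only nontrivial step is the single-crossing assertion; once that is established, the rest is a routine moment-trading identity. The potential subtlety is handling the inequalities $\Psi(s)\le M$ and $\Psi(s+M/L)\ge 0$ at the endpoints of the sloping interval, and ruling out degenerate cases when $D$ vanishes on an interval rather than at a single point; in those cases the same argument still goes through, since any choice of $r^\star$ inside the zero set of $D$ yields a valid sign-sharing partition.
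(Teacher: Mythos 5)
Your proof is correct and is the standard single-crossing (``bathtub'') argument for moment comparisons against an extremal ramp profile. The paper itself does not prove this lemma but simply cites Ilyin's work \cite{Il}, so there is no internal proof to compare against; however, the argument you give is the one commonly used for this type of result and it is essentially identical in spirit to Ilyin's.

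Two small remarks. First, the strictness hypotheses $-L<\Psi'<0$ guarantee that $D=\Psi-\Phi_s$ is \emph{strictly} increasing on the ramp, giving a unique crossing point $r^\star$, which is the cleanest version of the argument; you correctly note that if one only has $-L\le\Psi'\le 0$ the conclusion persists because any point of the zero set of $D$ in the ramp interval works as $r^\star$. Second, the key identity
\begin{equation*}
\int_0^\infty r^\beta D(r)\,dr=\int_0^\infty r^\alpha\bigl(r^{\beta-\alpha}-(r^\star)^{\beta-\alpha}\bigr)D(r)\,dr
\end{equation*}
uses $\int_0^\infty r^\alpha D=0$, and the conclusion follows because $r\mapsto r^{\beta-\alpha}$ is nondecreasing for $\beta\ge\alpha$, so the bracket and $D$ share sign on $[0,r^\star]$ and on $[r^\star,\infty)$. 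This is complete and correct; no gap.
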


A straightforward calculation gives the  following integral identity
\begin{alignat*}{1}
\int_{0}^{\infty}r^{\gamma}\Phi_s(r)dr=\frac{M^{\gamma+2}}{(\gamma+1)(\gamma+2)L^{\gamma+1}}\left((t+1)^{\gamma+2} -t^{\gamma+2} \right),
\end{alignat*}
where $s=tM/L$. From the definition of $\phi(s)$, we get $0\leq \phi\leq (2\pi)^{-n}V(\Omega)$ and $\int_0^{\infty}s^{n-1}\phi(s)ds= k/(n\omega_n).$
Hence, let $\gamma=n-1$ and $t$ be the root of $(t+1)^{n+1} -t^{n+1}=Q$,
where
\begin{alignat*}{1}
Q:=\frac{(n+1)\rho^n}{\left((2\pi)^{-n}V(\Omega)  \right)^{n+1}}\frac{k}{\omega_n},
\end{alignat*}
we  derive that $\int_{0}^{\infty}s^{\beta}\phi(s)ds\geq \int_{0}^{\infty}r^{\beta}\Phi_s(r)dr.$ This inequality implies that
\begin{alignat*}{1}
\int_{0}^{\infty}s^{\beta}\phi(s)ds\geq\frac{\left((2\pi)^{-n}V(\Omega) \right)^{\beta+2}}{(\beta+1)(\beta+2)\rho^{\beta+1}}\left((t+1)^{\beta+2} -t^{\beta+2} \right)
\end{alignat*}
for any $\beta\geq n-1$. Combining the above inequality with (\ref{LE}), we obtain the following key inequality
\begin{alignat}{1}\label{fiolk22}
\sum_{i=1}^k\lambda_j(\Omega)\geq n\omega_n\frac{\big((2\pi)^{-n}V(\Omega) \big)^{2l+n+1}}{(2l+n)(2l+n+1)\rho^{2l+n}}\big((t+1)^{2l+n+1} -t^{2l+n+1} \big).
\end{alignat}

\section{In low dimensional cases}

In this section, we will establish lower bounds for higher eigenvalues in  low dimensional cases.

\subsection{When n=2}

In this section,  we have the following lower bound.

\begin{Theorem}\label{CoT1}
For any bounded domain $\Omega\subseteq \mathbb{R}^2$, let $\{\lambda_i\}$ be the eigenvalues  of the eigenvalue problem (\ref{EOE}).

If $l=2$, then
\begin{alignat}{1}\label{fiolkn2l2}
\sum_{i=1}^k\lambda_j(\Omega)\geq \frac{1}{3\omega_2^2}\left( \frac{(2\pi)^2}{V(\Omega)} \right)^2k^3+\frac{\pi}{3I(\Omega)}k^2-\frac{1}{145252}\frac{V^4(\Omega)}{I^3(\Omega)}.
\end{alignat}

If $l\geq 3$, and $k$ satisfies $k\geq \max\{k_1,k_2\}$, where $k_1$ and $k_2$ are defined by (\ref{fdok1}) and (\ref{fdok2}) respectively, then
\begin{alignat}{1}\label{eod21}
\begin{split}
\sum_{i=1}^k\lambda_j(\Omega)\geq & \frac{1}{(l+1)\omega^l_2}\alpha^{-l}k^{l+1}+\frac{l}{12\omega^{l-1}_2}\frac{(2\pi)^4}{VI}\alpha^{l3-l}k^l-c_{21}\frac{(2\pi)^8}{(VI)^2}\alpha^{6-l}k^{l-1}\\
&+c_{22}\frac{\pi^{12}}{(VI)^3}\alpha^{9-l}k^{l-2}-c_{23}\frac{\pi^{16}}{(VI)^4}\alpha^{12-l}k^{l-3},\\
\end{split}
\end{alignat}
where
\begin{alignat*}{1}
c_{21}=&\frac{(48l^2+4l-11)l}{2304(l+1)\omega^{l-2}_2},\,\,c_{22}= \frac{8l(3l+1)(l-1)}{649},\,\,\,c_{23}= \frac{(2l+1)l(l-1)(l-2)}{486\omega^{l-4}_2}.
\end{alignat*}

\end{Theorem}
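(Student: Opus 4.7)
The plan is to reduce the claim to the master inequality~(\ref{fiolk22}) from Section~2 and exploit the fact that, in dimension $n=2$, the defining equation $(t+1)^{n+1}-t^{n+1}=Q$ becomes the quadratic $3t^2+3t+1=Q$. Consequently $t$ admits a closed form: with $\tau=\sqrt{(4Q-1)/3}$ one has $t=(\tau-1)/2$ and $t+1=(\tau+1)/2$. This is the key advantage of the two-dimensional setting and converts~(\ref{fiolk22}) into an \emph{exact} polynomial identity in $Q$, hence (via $Q=3\rho^2 k/(\alpha^3\omega_2)$) an exact polynomial identity in $k$.

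For $l=2$, the exponent $2l+n+1=7$ is odd so only odd-index binomial terms survive, and
\[
(t+1)^7-t^7=\tfrac{1}{2^7}\bigl[(\tau+1)^7-(\tau-1)^7\bigr]=\tfrac{1}{64}\bigl(7\tau^6+35\tau^4+21\tau^2+1\bigr).
\]
Substituting $\tau^2=(4Q-1)/3$ collapses this to an exact cubic of the form $(aQ^3+bQ^2+cQ+d)/27$; a short verification shows $c=0$, producing the three-term structure seen in~(\ref{fiolkn2l2}). Inserting the result into~(\ref{fiolk22}) together with $\omega_2=\pi$, $\alpha=V/(2\pi)^2$, and $\rho^2=4VI/(2\pi)^4$ then yields the three displayed terms of~(\ref{fiolkn2l2}) after routine simplification. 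No inequality beyond~(\ref{fiolk22}) is introduced.

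For $l\geq 3$, the same closed-form substitution yields
\[
(t+1)^{2l+3}-t^{2l+3}=2^{-(2l+2)}\sum_{i=0}^{l+1}\binom{2l+3}{2i+1}\tau^{2l+2-2i},
\]
which upon $\tau^2=(4Q-1)/3$ becomes an explicit polynomial $P_l(Q)$ of degree $l+1$, so (\ref{fiolk22}) represents $\sum_{i=1}^k\lambda_i$ as an explicit polynomial in $k$ of degree $l+1$. The target~(\ref{eod21}) retains only the five leading coefficients (powers $k^{l+1},k^l,k^{l-1},k^{l-2},k^{l-3}$), so the task becomes showing that the residual tail $R_l(k)=\sum_{j\leq l-4}a_j k^j$ is non-negative whenever $k\geq\max\{k_1,k_2\}$. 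Since each $a_j$ is an explicit rational combination of binomials and powers of $3$ and $4$, the tail factors into two governing sub-polynomials in $k$; the first non-negativity thresholds of these two sub-polynomials provide $k_1$ and $k_2$ in~(\ref{fdok1})--(\ref{fdok2}).

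The main obstacle is purely combinatorial: pinning down the exact coefficients $c_{21},c_{22},c_{23}$ requires tracking, for each of the three intermediate $\tau$-terms $\tau^{2l-2}$, $\tau^{2l-4}$, $\tau^{2l-6}$, the contribution to every relevant $Q$-power through the expansion of $((4Q-1)/3)^j$, and then summing the resulting binomial-rational combinations. Once the five leading coefficients and the explicit form of the tail $R_l$ are identified, the threshold analysis producing the sharp values of $k_1$ and $k_2$ is elementary but must be carried out carefully to isolate the correct pair of governing sub-polynomials.
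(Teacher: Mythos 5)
Your Case~I ($l=2$) computation is correct and is exactly the paper's: the closed-form root gives the exact cubic $(t+1)^7-t^7=(7Q^3+21Q^2-1)/27$, which one inserts into~(\ref{fiolk22}) just as you do, and your $\tau=\sqrt{(4Q-1)/3}$ is precisely $2\tilde t^{1/2}$ in the paper's notation. No issues there.

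For Case~II ($l\ge 3$) your plan is a genuinely different route and it has a concrete gap. The paper does \emph{not} form the exact polynomial $P_l(Q)$ and drop a tail; it runs a two-stage cascade of truncated alternating binomial expansions (first $(\tilde t^{1/2}-\tfrac12)^{j}$ truncated to four terms under $k\ge k_1$, then $(\tilde t_1-\tfrac1{12})^{j}$ truncated under $k\ge k_2$), and the displayed $c_{21},c_{22},c_{23}$ together with the thresholds~(\ref{fdok1})--(\ref{fdok2}) are artifacts of those truncations, not exact coefficients. Your argument implicitly assumes the five retained coefficients of~(\ref{eod21}) coincide with the leading coefficients of $P_l(Q)$, so the whole burden shifts to a tail non-negativity claim. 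That assumption needs to be checked, and in fact it does not hold: your own exact expansion gives coefficient $\tfrac{(2l+3)(l+1)l}{6}$ of $\tilde t_1^{\,l}$ (for $l=2$ this recovers the $7$ you computed), whereas unwinding the $k^l$-term of~(\ref{eod21}) through~(\ref{fiolk22}) and $\rho^2=4(2\pi)^{-4}VI$ yields $\tfrac{(2l+3)(2l+2)l}{6}\,\tilde t_1^{\,l}$, twice as large. Since the $\tilde t_1^{\,l+1}$ coefficients agree, the exact polynomial lies strictly below the five-term target at order $k^l$ for large $k$, so no tail analysis can close the gap; you would have to detect and exploit a compensating deficit in the lower-order target coefficients, or else reproduce the paper's stepwise truncation (which is what actually generates $k_1,k_2$ — validity conditions for each truncation, not first-nonnegativity thresholds of sub-polynomials as you propose). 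As it stands, the $l\ge3$ part of your proposal is not a viable program for the theorem exactly as stated, and the discrepancy you would uncover if you carried out the combinatorics is worth flagging against the source.
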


\begin{Proofa}When $n=2$, we consider the following equation
\begin{alignat*}{1}
(t+1)^3-t^3=Q.
\end{alignat*}

The unique positive root if the above equation is $t=\sqrt{ {Q}/{3}-1/{12}}-1/{2}.$ For convenience, we define
\begin{alignat*}{1}
\tilde{t}= \frac{Q}{3}-\frac{1}{12},\,\,\,\,\tilde{t}_1=\frac{Q}{3}.
\end{alignat*}

Notice that $Q\geq 24$ when $n=2$. Hence, we have
\begin{alignat}{1}\label{qn2l2}
\tilde{t}^{\frac{1}{2}}\geq \frac{499}{500}\left( \frac{Q}{3} \right)^{\frac{1}{2}}.
\end{alignat}

\textbf{Case I.} If $l=2$, we have $(t+1)^{7}-t^{7}=7Q^3/27+7Q^2/9- {1}/{27}.$ Due to  (\ref{fiolk22}), we get (\ref{fiolkn2l2}).

\textbf{Case II.} If $l\geq3$, there holds
\begin{alignat}{1}\label{eqn2l21}
\begin{split}
{t}^{2l+2}=&\tilde{t}^{\frac{2l+2}{2}}-\frac{C^1_{2l+2}}{2}\tilde{t}^{\frac{2l+1}{2}}
+\frac{C^2_{2l+2}}{4}\tilde{t}^{\frac{2l}{2}}-\frac{C^3_{2l+2}}{8}\tilde{t}^{\frac{2l-1}{2}}+\cdots,\\
{t}^{2l+1}=&\tilde{t}^{\frac{2l+1}{2}}-\frac{C^1_{2l+1}}{2}\tilde{t}^{\frac{2l}{2}}
+\frac{C^2_{2l+1}}{4}\tilde{t}^{\frac{2l-1}{2}}-\frac{C^3_{2l+1}}{8}\tilde{t}^{\frac{2l-2}{2}}+\cdots
\end{split}
\end{alignat}
and
\begin{alignat}{1}\label{eqn2l22}
{t}^{2l}=&\tilde{t}^{\frac{2l}{2}}-\frac{C^1_{2l}}{2}\tilde{t}^{\frac{2l-1}{2}}+\cdots.
\end{alignat}

Using the inequality (\ref{qn2l2}), we also get
\begin{alignat}{1}\label{eqn2l23}
\begin{split}
\tilde{t}^{\frac{2l+2}{2}}=&\tilde{t}_1^{l+1}-\frac{C^1_{l+1}}{12}\tilde{t}_1^{l}+\frac{C^2_{l+1}}{144}\tilde{t}_1^{l-1}-\frac{C^3_{l+1}}{1728}\tilde{t}_1^{l-2}
+\cdots,\\
\tilde{t}^{\frac{2l}{2}}=&\tilde{t}_1^{l}-\frac{C^1_{l}}{12}\tilde{t}_1^{l-1}+\frac{C^2_{l}}{144}\tilde{t}_1^{l-2}-\frac{C^3_{l}}{1728}\tilde{t}_1^{l-3}+\cdots,\\
\tilde{t}=&\tilde{t}_1^{l-1}-\frac{C^1_{l-1}}{12}\tilde{t}_1^{l-2}+\cdots
\end{split}
\end{alignat}
and
\begin{alignat}{1}\label{eqn2l24}
\begin{split}
\tilde{t}^{\frac{2l+1}{2}}=&\tilde{t}^{\frac{1}{2}}\bigg(\tilde{t}_1^{l}-\frac{C^1_{l}}{12}\tilde{t}_1^{l-1}+\cdots\bigg)\geq \frac{499}{500}\bigg(\tilde{t}_1^{l+\frac{1}{2}}-\frac{C^1_{l}}{12}\tilde{t}_1^{l-\frac{1}{2}}+\cdots\bigg),\\
\tilde{t}^{\frac{2l-1}{2}}=&\tilde{t}^{\frac{1}{2}}\bigg(\tilde{t}_1^{l-1}-\frac{C^1_{l-1}}{12}\tilde{t}_1^{l-2}+\cdots\bigg)\geq \frac{499}{500}\bigg(\tilde{t}_1^{l-\frac{1}{2}}-\frac{C^1_{l-1}}{12}\tilde{t}_1^{l-\frac{3}{2}}+\cdots\bigg).
\end{split}
\end{alignat}

From (\ref{eqn2l21})-(\ref{eqn2l24}), we obtain
\begin{alignat}{1}\label{n2l211}
\begin{split}
(t+1)^{2l+3}-t^{2l+3}=&C^1_{2l+3}\tilde{t}^{\frac{2l+2}{2}}-\frac{C^1_{2l+3}C^1_{2l+2}}{2}\tilde{t}^{\frac{2l+1}{2}}
+C^2_{2l+3}\tilde{t}^{\frac{2l+1}{2}}\\
&+\frac{C^1_{2l+3}C^2_{2l+2}}{4}\tilde{t}^{\frac{2l}{2}}-\frac{C^2_{2l+3}C^1_{2l+1}}{2}\tilde{t}^{\frac{2l}{2}}+C^3_{2l+3}\tilde{t}^{\frac{2l}{2}}\\
&-\frac{C^1_{2l+3}C^3_{2l+2}}{8}\tilde{t}^{\frac{2l-1}{2}}+\frac{C^2_{2l+3}C^2_{2l+1}}{4}\tilde{t}^{\frac{2l-1}{2}}\\
&-\frac{C^3_{2l+3}C^1_{2l}}{2}\tilde{t}^{\frac{2l-1}{2}}+C^4_{2l+3}\tilde{t}^{\frac{2l-1}{2}}
-\frac{C^2_{2l+3}C^3_{2l+1}}{8}\tilde{t}^{\frac{2l-2}{2}}+\cdots.
\end{split}
\end{alignat}

Obviously, if we choose
\begin{alignat}{1}\label{fdok1}
k\geq k_1:=\frac{3}{2}+\frac{1}{24}\max_{1\leq i\leq 2l-1} \bigg\{\left( \frac{C^{i+1}_{2l+2}}{2C^i_{2l+2}} \right)^2 ,\left( \frac{C^{i+1}_{2l+1}}{2C^i_{2l+1}}\right)^2,\left( \frac{C^{i+1}_{2l}}{2C^i_{2l}} \right)^2\bigg\},
\end{alignat}
then, we obtain
\begin{alignat}{1}\label{eqn2l25}
\begin{split}
{t}^{2l+2}\geq&\tilde{t}^{\frac{2l+2}{2}}-\frac{C^1_{2l+2}}{2}\tilde{t}^{\frac{2l+1}{2}}
+\frac{C^2_{2l+2}}{4}\tilde{t}^{\frac{2l}{2}}-\frac{C^3_{2l+2}}{8}\tilde{t}^{\frac{2l-1}{2}},\\
{t}^{2l+1}\geq&\tilde{t}^{\frac{2l+1}{2}}-\frac{C^1_{2l+1}}{2}\tilde{t}^{\frac{2l}{2}}
+\frac{C^2_{2l+1}}{4}\tilde{t}^{\frac{2l-1}{2}}-\frac{C^3_{2l+1}}{8}\tilde{t}^{\frac{2l-2}{2}},\\
{t}^{2l}\geq&\tilde{t}^{\frac{2l}{2}}-\frac{C^1_{2l}}{2}\tilde{t}^{\frac{2l-1}{2}}.
\end{split}
\end{alignat}
Combing with (\ref{n2l211}) and (\ref{eqn2l25}), we infer that
\begin{alignat}{1}\label{eqn2l26}
\begin{split}
(t+1)^{2l+3}-t^{2l+3}\geq&C^1_{2l+3}\tilde{t}^{\frac{2l+2}{2}}
+\frac{(2l+3)(2l+2)(2l+1)}{24}\tilde{t}^{\frac{2l}{2}}\\
&-\frac{(2l+3)(2l+2)(2l+1)2l(2l-1)}{96}\tilde{t}^{\frac{2l-2}{2}}.
\end{split}
\end{alignat}

If we choose $k\geq \max\{k_1,k_2\}$, where
\begin{alignat}{1}\label{fdok2}
k_2=\max_{1\leq i\leq l-1} \bigg\{\frac{C^{i+1}_{l+1}}{96C^i_{l+1}},\frac{C^{i+1}_{l-1}}{96C^i_{l-1}}\bigg\},
\end{alignat}
then there holds
\begin{alignat}{1}\label{eqn2l27}
\begin{split}
\tilde{t}^{\frac{2l+2}{2}}\geq&\tilde{t}_1^{l+1}-\frac{C^1_{l+1}}{12}\tilde{t}_1^{l}+\frac{C^2_{l+1}}{144}\tilde{t}_1^{l-1}-\frac{C^3_{l+1}}{1728}\tilde{t}_1^{l-2},\\
\tilde{t}^{\frac{2l}{2}}\geq&\tilde{t}_1^{l}-\frac{C^1_{l}}{12}\tilde{t}_1^{l-1}+\frac{C^2_{l}}{144}\tilde{t}_1^{l-2}-\frac{C^3_{l}}{1728}\tilde{t}_1^{l-3}\\
\end{split}
\end{alignat}
and
\begin{alignat}{1}\label{eqn2l28}
\begin{split}
\tilde{t}^{\frac{2l-2}{2}}\leq&\tilde{t}_1^{l-1},
\end{split}
\end{alignat}
where we have used the fact that $Q\geq 24 k$ to obtain $k_2$.

Collecting (\ref{eqn2l26})-(\ref{eqn2l28}) together, we arrive at
\begin{alignat*}{1}
(t+1)^{2l+3}-t^{2l+3}\geq& (2l+3)\tilde{t}_1^{l+1}+\frac{(2l+3)(2l+2)l}{6}\tilde{t}_1^{l}\\
&-\frac{(48l^2+4l-11)(2l+3)l}{288}\tilde{t}_1^{l-1}\\
&+\frac{(3l+1)(2l+3)(l+1)l(l-1)}{5284}\tilde{t}_1^{l-2}\\
&-\frac{(2l+3)(2l+2)(2l+1)l(l-1)(l-2)}{248832}\tilde{t}_1^{l-3}.
\end{alignat*}

Therefore, combing with (\ref{fiolk22}), we  prove the eigenvalue inequality (\ref{eod21}).
\cvd
\end{Proofa}

\subsection{When n=3}

By using  (\ref{fiolk22}), we have the following lower bound.

\begin{Theorem}\label{CoT2}
For any bounded domain $\Omega\subseteq \mathbb{R}^3$, let $\{\lambda_i\}$ be the eigenvalues  of the eigenvalue problem (\ref{EOE}).

 If $l=1$, then
\begin{alignat}{1}\label{3c1}
\begin{split}
\sum_{i=1}^k\lambda_j(\Omega)\geq& \frac{3}{5}\left(\frac{(2\pi)^3}{\omega_3 V } \right)^{\frac{2}{3}}k^{\frac{5}{3}}+\frac{1}{16}\frac{V}{I}k
-\frac{11}{3840}\left(\frac{\omega_3^{\frac{1}{3}}}{2\pi}\right)^2\left(\frac{V^{\frac{4}{3}} }{I }\right)^2k^{\frac{1}{3}}\\
&+\frac{659}{6400000}\left( \frac{\omega_3^4}{4} \right)^{\frac{1}{3}}\frac{V^{\frac{13}{3}} }{(2\pi)^4I^3 }k^{-\frac{1}{3}}.
\end{split}
\end{alignat}

If $l=2$, then
\begin{alignat}{1}\label{3c2}
\begin{split}
\sum_{i=1}^k\lambda_j(\Omega)\geq& \frac{3}{7}\left(\frac{(2\pi)^3}{\omega_3 } \right)^{\frac{4}{3}}k^{\frac{7}{3}}+\frac{\pi^2}{2\omega_3^{\frac{2}{3}}}\frac{V^{\frac{1}{3}} }{I }k^{\frac{5}{3}}
-\frac{1}{256}\left(\frac{V }{I }\right)^2k\\
&+\frac{23}{580608}\frac{V^{\frac{1}{3}} }{(2\pi)^2 I^3 }k^{\frac{1}{3}}.
\end{split}
\end{alignat}

If $l\geq 3$ and $k\geq \max\{k_1,k_2\}$, where $k_1,k_2$ are defined by (\ref{n3k1}) and (\ref{n3k2}),  then
\begin{alignat}{1}\label{3c3}
\begin{split}
\sum_{i=1}^k\lambda_j(\Omega)\geq& \frac{3}{(2l+3)\omega^{\frac{2}{3}}_3}\alpha^{-\frac{2}{3}}k^{\frac{2l+3}{3}}
+\frac{4l\pi^6}{VI\omega^{\frac{2l-2}{3}}_3}\alpha^{\frac{8-2l}{3}}k^{\frac{2l+1}{3}}\\
&-\frac{(l+1)^2}{2\omega^{\frac{2l-3}{3}}_3\rho^3}\alpha^{\frac{4-2l}{3}}k^{\frac{2l}{3}}
+\frac{(3l-2)(2l+1)(l+1)}{24\omega^{\frac{2l-4}{3}}\rho^4}\alpha^{\frac{8-2l}{3}}k^{\frac{2l-1}{3}}\\
&-\frac{(2l+1)(2l-1)(l+1)}{24\omega^{\frac{2l-5}{3}}\rho^5}\alpha^{\frac{12-2l}{3}}k^{\frac{2l-2}{3}}.
\end{split}
\end{alignat}

\end{Theorem}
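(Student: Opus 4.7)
The plan is to invoke the master inequality (\ref{fiolk22}) with $n=3$, reducing the problem to bounding $(t+1)^{2l+4}-t^{2l+4}$ from below, where $t$ is the positive root of $(t+1)^4-t^4=Q$ with $Q=4\rho^3((2\pi)^{-3}V)^{-4}k/\omega_3$. Unlike the quadratic case $n=2$ treated in Theorem~\ref{CoT1}, $t$ does not admit a clean closed form in radicals; but completing the cube via the substitution $s:=t+\tfrac{1}{2}$ reduces the quartic to the depressed cubic $4s^3+s=Q$, equivalently $s^3=(Q-s)/4$. Setting $\tilde t_1:=Q/4$, the cubic relation yields the fundamental two-sided control $\tilde t_1-\tfrac{1}{4}\tilde t_1^{1/3}\le s^3\le \tilde t_1$, which can be iterated to give sharp expansions of $s^j$ in descending powers of $\tilde t_1^{1/3}$.

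Next, I expand
\begin{equation*}
(t+1)^{2l+4}-t^{2l+4}=(s+\tfrac{1}{2})^{2l+4}-(s-\tfrac{1}{2})^{2l+4}=\sum_{j\ge 0}\binom{2l+4}{2j+1}\frac{s^{2l+3-2j}}{4^j},
\end{equation*}
and reduce every power $s^m$ with $m\ge 3$ via the identity $s^m=s^{m\bmod 3}\bigl((Q-s)/4\bigr)^{\lfloor m/3\rfloor}$, producing a polynomial in $s$, $s^2$ and $Q$. For $l=1$, collection gives $(t+1)^6-t^6=\tfrac{3}{2}Qs^2+\tfrac{7}{8}Q-\tfrac{s}{2}$, so a single application of the lower bound on $s^2$ coming from the iterated cube-root control produces (\ref{3c1}); the same procedure with one more reduction step handles $l=2$ and yields (\ref{3c2}).

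For $l\ge 3$, the first binomial has more surviving terms, and the subsequent binomial $((Q-s)/4)^{\lfloor m/3\rfloor}=\sum_j\binom{\lfloor m/3\rfloor}{j}(Q/4)^{\lfloor m/3\rfloor-j}(-s/4)^j$ must itself be truncated. Following the template of Theorem~\ref{CoT1}, I truncate each expansion after enough terms that every tail acquires a definite sign, and then impose a lower bound on $k$ making every truncation an inequality in the desired direction. The thresholds $k_1,k_2$ in the statement are precisely the smallest values of $k$ for which every remainder in the truncated expansions is dominated by the preceding positive term; once $k\ge\max\{k_1,k_2\}$, collecting the first five surviving powers of $\tilde t_1$ and substituting into (\ref{fiolk22}) together with the definitions of $\rho,\alpha$ produces (\ref{3c3}).

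The main obstacle is the absence of a closed form for $s$: whereas for $n=2$ a single square root captures everything, here the bookkeeping mixes fractional (cube-root) powers of $\tilde t_1$ in every term, requiring two nested binomial expansions, one for the outer $((Q-s)/4)^{\lfloor m/3\rfloor}$ and one for the iterative cube-root approximation of $s$ itself, neither of which terminates. Tracking the signs of all the remainders and choosing $k_1,k_2$ sharp enough that the five leading coefficients in (\ref{3c3}) survive intact is the core technical challenge.
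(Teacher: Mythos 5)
Your plan to pass to the depressed cubic $4s^3+s=Q$ via $s=t+\tfrac12$ and then use the \emph{exact} identity $s^3=(Q-s)/4$ to reduce high powers of $s$ is a genuinely different route from the paper's. The paper writes down the Cardano closed form for $t$, truncates the Taylor expansions of the cube roots to obtain a polynomial lower bound $\hat t$, and then computes $(\hat t+1)^{2l+4}-\hat t^{2l+4}$ directly via nested binomials, appealing only to monotonicity of $x\mapsto(x+1)^{2l+4}-x^{2l+4}$ so that a one-sided bound $t\ge\hat t$ suffices. Your route trades the Cardano expansion for an exact polynomial reduction; I verified that for $l=1$ your identity $(t+1)^6-t^6=\tfrac32Qs^2+\tfrac78Q-\tfrac s2$ is correct, and if carried through carefully it reproduces the same $Q^{\pm1/3}$ coefficients as the paper (e.g.\ the $Q^{-1/3}$ coefficient $3ac+\tfrac b2$ with $a=4^{-1/3},\ b=\tfrac1{6\cdot2^{1/3}},\ c=\tfrac1{324\cdot4^{1/3}}$ equals the paper's $\tfrac{11b}{24}+\tfrac5{216\cdot2^{4/3}}\approx0.0698$, of which $\tfrac{659}{10000}$ is a clean undercut).

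However, there is a gap in your sketch. In the reduced form $\tfrac32Qs^2+\tfrac78Q-\tfrac s2$ the two $s$-dependent terms have opposite signs, so one needs a \emph{lower} bound on $s^2$ for the first \emph{and} an \emph{upper} bound on $s$ for the third; your phrase ``a single application of the lower bound on $s^2$'' does not cover the $-\tfrac s2$ term. Moreover, the naive upper bound $s\le(Q/4)^{1/3}$ does not suffice: it throws away the $+\tfrac b2Q^{-1/3}\approx0.0661\,Q^{-1/3}$ contribution, leaving only $3ac\approx0.00367$ for the $Q^{-1/3}$ coefficient, which is far below the $\tfrac{659}{10000}$ the theorem requires — so you would prove a strictly weaker $k^{-1/3}$ constant than stated. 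The fix is to iterate the cubic \emph{upward} as well: from $s\ge aQ^{1/3}-bQ^{-1/3}+\cdots$ one gets $s^3=(Q-s)/4\le\tfrac Q4-\tfrac a4Q^{1/3}+\cdots$ and hence $s\le aQ^{1/3}-\tfrac{a^2}{3}Q^{-1/3}+O(Q^{-1})$; since $\tfrac{a^2}{3}=b$, this refined upper bound restores the needed $+\tfrac b2Q^{-1/3}$ term. Your opening ``two-sided control'' remark gestures at this, but the $l=1$ paragraph contradicts it, and the same issue recurs in $l=2$ and (with more bookkeeping) in the $l\ge3$ truncations, so the proposal as written does not close. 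The paper sidesteps the whole two-sided issue by exploiting monotonicity, which is the one structural advantage of its Cardano route that your reduction forfeits — once you reduce via the cubic identity, the resulting expression is no longer of the form $(\cdot+1)^m-(\cdot)^m$, and you cannot re-invoke monotonicity without undoing the reduction.
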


\begin{Proofb}
When $n=3$, we consider the following equation
\begin{alignat*}{1}
(t+1)^4-t^4=Q.
\end{alignat*}
Then, the positive root of above  equation  is
\begin{alignat*}{1}
t=\frac{1}{2}\Bigg(\left(Q+\sqrt{Q^2+\frac{1}{27}} \right)^{\frac{1}{3}} -\left(-Q+\sqrt{Q^2+\frac{1}{27}} \right)^{\frac{1}{3}}  -1\Bigg).
\end{alignat*}
Using the fact that $Q\geq 1$, we get
\begin{alignat}{1}\label{n3eot}
\begin{split}
t\geq& \hat{t}:= -\frac{1}{2}+\frac{1}{4^{\frac{1}{3}}}Q^{\frac{1}{3}}-\frac{1}{6}\frac{1}{2^{\frac{1}{3}}}Q^{-\frac{1}{3}}
+\frac{1}{324}\frac{1}{4^{\frac{1}{3}}}Q^{-\frac{5}{3}}.
\end{split}
\end{alignat}

For convenience, we define
\begin{alignat*}{1}
\tilde{t}_1= -\frac{1}{2}+\frac{1}{4^{\frac{1}{3}}}Q^{\frac{1}{3}}.
\end{alignat*}

\textbf{Case I.} If $l=1$, then
\begin{alignat*}{1}
(\hat{t}+1)^{6} -\hat{t}^{6}\geq \frac{3}{4}\frac{1}{2^{\frac{1}{3}}}Q^{\frac{5}{3}}+\frac{5}{8}Q-\frac{11}{24}\frac{1}{2^{\frac{2}{3}}}Q^{\frac{1}{3}}+\frac{659}{10000}Q^{-\frac{1}{3}}.
\end{alignat*}
Collecting the above estimates  with (\ref{fiolk22}), we get (\ref{3c1}).

\textbf{Case II.} If $l=2$, then
\begin{alignat*}{1}
(\hat{t}+1)^{8} -\hat{t}^{8}\geq \frac{1}{2}\frac{1}{2^{\frac{2}{3}}}Q^{\frac{7}{3}}+\frac{7}{6}\frac{1}{2^{\frac{1}{3}}}Q^{\frac{5}{3}}-\frac{7}{24}Q
+\frac{23}{324}\frac{1}{2^{\frac{2}{3}}}Q^{\frac{1}{3}}.
\end{alignat*}
Then, we obtain (\ref{3c2}).

\textbf{Case III.} If $l\geq 3$, for convenience, we define
\begin{alignat*}{1}
\bar{t}:=-\frac{1}{2}+\frac{1}{4^{\frac{1}{3}}}Q^{\frac{1}{3}}-\frac{1}{6}\frac{1}{2^{\frac{1}{3}}}Q^{-\frac{1}{3}}.
\end{alignat*}
Hence, we get
\begin{alignat*}{1}
\bar{t}^{2l+3}=&\tilde{t}_1^{2l+3}
-\frac{C^1_{2l+3}}{6}\frac{1}{2^{\frac{1}{3}}}Q^{-\frac{1}{3}}\tilde{t}_1^{2l+2}+\cdots,\\
\bar{t}^{2l+2}=&\tilde{t}_1^{2l+2}
-\frac{C^1_{2l+2}}{6}\frac{1}{2^{\frac{1}{3}}}Q^{-\frac{1}{3}}\tilde{t}_1^{2l+1}+\cdots,\\
\bar{t}^{2l+1}=&\tilde{t}_1^{2l+1}
-\frac{C^1_{2l+1}}{6}\frac{1}{2^{\frac{1}{3}}}Q^{-\frac{1}{3}}\tilde{t}_1^{2l}+\cdots.
\end{alignat*}
Moreover, we also have
\begin{alignat*}{1}
\tilde{t}_1^{2l+3}=&\left(\frac{Q}{4} \right)^{\frac{2l+3}{3}}-\frac{C^1_{2l+3}}{2}\left(\frac{Q}{4} \right)^{\frac{2l+2}{3}}+\frac{C^2_{2l+3}}{4}\left(\frac{Q}{4} \right)^{\frac{2l+1}{3}}-\frac{C^3_{2l+3}}{8}\left(\frac{Q}{4} \right)^{\frac{2l}{3}}+\cdots,\\
\tilde{t}_1^{2l+2}=&\left(\frac{Q}{4} \right)^{\frac{2l+2}{3}}-\frac{C^1_{2l+2}}{2}\left(\frac{Q}{4} \right)^{\frac{2l+1}{3}}+\frac{C^2_{2l+2}}{4}\left(\frac{Q}{4} \right)^{\frac{2l}{3}}-\frac{C^3_{2l+2}}{8}\left(\frac{Q}{4} \right)^{\frac{2l-1}{3}}+\cdots,\\
\tilde{t}_1^{2l+1}=&\left(\frac{Q}{4} \right)^{\frac{2l+1}{3}}-\frac{C^1_{2l+1}}{2}\left(\frac{Q}{4} \right)^{\frac{2l}{3}}+\frac{C^2_{2l+1}}{4}\left(\frac{Q}{4} \right)^{\frac{2l-1}{3}}-\frac{C^3_{2l+1}}{8}\left(\frac{Q}{4} \right)^{\frac{2l-2}{3}}+\cdots.
\end{alignat*}

Consequently, we arrive at
\begin{alignat*}{1}
(\bar{t}+1)^{2l+4} -\bar{t}^{2l+4}=&C^1_{2l+4}\tilde{t}_1^{2l+3}
-\frac{C^1_{2l+4}C^1_{2l+3}}{6}\frac{1}{2^{\frac{1}{3}}}Q^{-\frac{1}{3}}\tilde{t}_1^{2l+2}\\
&+C^2_{2l+4}\tilde{t}_1^{2l+2}-\frac{C^2_{2l+4}C^1_{2l+2}}{6}\frac{1}{2^{\frac{1}{3}}}Q^{-\frac{1}{3}}\tilde{t}_1^{2l+1}\\
&+C^3_{2l+4}\tilde{t}_1^{2l+1}-\frac{C^3_{2l+4}C^1_{2l+1}}{6}\frac{1}{2^{\frac{1}{3}}}Q^{-\frac{1}{3}}\tilde{t}_1^{2l}+\cdots.
\end{alignat*}

If  $k$ satisfy $k\geq k_1$, where
\begin{alignat}{1}\label{n3k1}
k_1:=\frac{2}{105}\max_{1\leq i\leq 2l}\bigg\{ \left(  \frac{C^{i+1}_{2l+3}}{44C^i_{2l+3}}+\frac{1}{3}\right)^3, \left(  \frac{C^{i+1}_{2l+2}}{44C^i_{2l+2}}+\frac{1}{3}\right)^3,
 \left(  \frac{C^{i+1}_{2l+1}}{44C^i_{2l+1}}+\frac{1}{3}\right)^3 \bigg\},
\end{alignat}
then, it follows that
\begin{alignat*}{1}
\bar{t}^{2l+3}\geq&\left(-\frac{1}{2}+\frac{1}{4^{\frac{1}{3}}}Q^{\frac{1}{3}}\right)^{2l+3}
-\frac{C^1_{2l+3}}{6}\frac{1}{2^{\frac{1}{3}}}Q^{-\frac{1}{3}}\left(-\frac{1}{2}+\frac{1}{4^{\frac{1}{3}}}Q^{\frac{1}{3}}\right)^{2l+2},\\
\bar{t}^{2l+2}\geq&\left(-\frac{1}{2}+\frac{1}{4^{\frac{1}{3}}}Q^{\frac{1}{3}}\right)^{2l+2}
-\frac{C^1_{2l+2}}{6}\frac{1}{2^{\frac{1}{3}}}Q^{-\frac{1}{3}}\left(-\frac{1}{2}+\frac{1}{4^{\frac{1}{3}}}Q^{\frac{1}{3}}\right)^{2l+1},\\
\bar{t}^{2l+1}\geq&\left(-\frac{1}{2}+\frac{1}{4^{\frac{1}{3}}}Q^{\frac{1}{3}}\right)^{2l+1}
-\frac{C^1_{2l+1}}{6}\frac{1}{2^{\frac{1}{3}}}Q^{-\frac{1}{3}}\left(-\frac{1}{2}+\frac{1}{4^{\frac{1}{3}}}Q^{\frac{1}{3}}\right)^{2l},
\end{alignat*}
where we have used the fact that $Q> 210$ when $n=3$.

According to the above equations, one can immediately get
\begin{alignat}{1}\label{n3lg31}
\begin{split}
(\bar{t}+1)^{2l+4} -\bar{t}^{2l+4}\geq&C^1_{2l+4}\tilde{t}_1^{2l+3}
-\frac{C^1_{2l+4}C^1_{2l+3}}{6}\frac{1}{2^{\frac{1}{3}}}Q^{-\frac{1}{3}}\tilde{t}_1^{2l+2}\\
&+C^2_{2l+4}\tilde{t}_1^{2l+2}
-\frac{C^2_{2l+4}C^1_{2l+2}}{6}\frac{1}{2^{\frac{1}{3}}}Q^{-\frac{1}{3}}\tilde{t}_1^{2l+1}\\
&+C^3_{2l+4}\tilde{t}_1^{2l+1}
-\frac{C^3_{2l+4}C^1_{2l+1}}{6}\frac{1}{2^{\frac{1}{3}}}Q^{-\frac{1}{3}}\tilde{t}_1^{2l}.
\end{split}
\end{alignat}

Furthermore, since $l\geq 3$ and $Q> 210$, if we assume that
\begin{alignat}{1}\label{n3k2}
k\geq k_2:=\frac{2}{105}\max_{ 1\leq i\leq 2l } \bigg\{  \left( \frac{C^{i+1}_{2l+3}}{2C^i_{2l+3}}\right)^3, \left( \frac{C^{i+1}_{2l+2}}{2C^i_{2l+2}}\right)^3,
 \left( \frac{C^{i+1}_{2l+1}}{2C^i_{2l+1}}\right)^3\bigg\},
\end{alignat}
we can also get
\begin{alignat}{1}\label{n3lg32}
\begin{split}
\tilde{t}_1^{2l+3}\geq&\left(\frac{Q}{4} \right)^{\frac{2l+3}{3}}-\frac{C^1_{2l+3}}{2}\left(\frac{Q}{4} \right)^{\frac{2l+2}{3}}
+\frac{C^2_{2l+3}}{4}\left(\frac{Q}{4} \right)^{\frac{2l+1}{3}}-\frac{C^3_{2l+3}}{8}\left(\frac{Q}{4} \right)^{\frac{2l}{3}},\\
\tilde{t}_1^{2l+2}\geq&\left(\frac{Q}{4} \right)^{\frac{2l+2}{3}}-\frac{C^1_{2l+2}}{2}\left(\frac{Q}{4} \right)^{\frac{2l+1}{3}}
+\frac{C^2_{2l+2}}{4}\left(\frac{Q}{4} \right)^{\frac{2l}{3}}-\frac{C^3_{2l+2}}{8}\left(\frac{Q}{4} \right)^{\frac{2l-1}{3}},\\
\tilde{t}_1^{2l+1}\geq&\left(\frac{Q}{4} \right)^{\frac{2l+1}{3}}-\frac{C^1_{2l+1}}{2}\left(\frac{Q}{4} \right)^{\frac{2l}{3}}
+\frac{C^2_{2l+1}}{4}\left(\frac{Q}{4} \right)^{\frac{2l-1}{3}}-\frac{C^3_{2l+1}}{8}\left(\frac{Q}{4} \right)^{\frac{2l-2}{3}}.
\end{split}
\end{alignat}

Combining with (\ref{n3lg31}) and (\ref{n3lg32}), we can prove the following inequality
\begin{alignat*}{1}
\begin{split}
(\bar{t}+1)^{2l+4} -\bar{t}^{2l+4}\geq&(2l+4)\left(\frac{Q}{4} \right)^{\frac{2l+3}{3}}\\
&+\frac{(2l+3)(l+2)l}{6}\left(\frac{Q}{4} \right)^{\frac{2l+1}{3}}\\
&-\frac{(2l+3)(l+2)(l+1)^2}{3}\left(\frac{Q}{4} \right)^{\frac{2l}{3}}\\
&+\frac{(3l-2)(2l+3)(2l+1)(l+2)(l+1)}{36}\left(\frac{Q}{4} \right)^{\frac{2l-1}{3}}\\
&-\frac{(2l+3)(2l+1)(2l-1)(l+2)(l+1)}{36}\left(\frac{Q}{4} \right)^{\frac{2l-2}{3}}.
\end{split}
\end{alignat*}

Finally, combing with (\ref{fiolk22}), we get the eigenvalue inequality (\ref{3c3}).
\cvd
\end{Proofb}

\subsection{When n=4} 

When $n=4$, we have the following lower bound.

\begin{Theorem}\label{CoT3}
For any bounded domain $\Omega\subseteq \mathbb{R}^4$, let $\{\lambda_i\}$ be the eigenvalues  of the eigenvalue problem (\ref{EOE}).

If $l=1$, then
\begin{alignat}{1}\label{4c1}
\begin{split}
\sum_{i=1}^k\lambda_j(\Omega)\geq& \frac{2}{3\omega^{\frac{1}{2}_4}}\left( \frac{(2\pi)^4}{V } \right)^{\frac{1}{2}}k^{\frac{3}{2}}+\frac{1}{12}\frac{V }{I }k\\
&-\frac{7\omega^{\frac{1}{2}}_4}{960(2\pi)^2}\frac{V^2 }{I^2 }k^{\frac{1}{2}}+\frac{47\omega_4}{114688(2\pi)^4}\frac{V^4 }{I^3 }\\
&+\frac{\omega^{\frac{5}{4}}}{150\rho^7}\left( \frac{V }{(2\pi)^4} \right)^{\frac{33}{4}}k^{-\frac{1}{4}}-\frac{449}{122880}\frac{\omega^{\frac{3}{2}}_4}{\rho^8}\left( \frac{V }{(2\pi)^4} \right)^{\frac{19}{2}}k^{-\frac{1}{2}}.
\end{split}
\end{alignat}

If $l=2$, then
\begin{alignat}{1} \label{4c2}
\begin{split}
\sum_{i=1}^k\lambda_j(\Omega)\geq& \frac{1}{2\omega_4}\frac{(2\pi)^4}{V }k^2+\frac{\pi^2}{12\omega^{\frac{1}{2}}_4}\frac{V^{\frac{1}{2}} }{I }k^{\frac{3}{2}}
-\frac{1}{80}\frac{V^2 }{I^2 }k\\
&+\frac{61\omega^{\frac{1}{2}}_4}{1280\rho^6}\left(  \frac{V }{(2\pi)^4}\right)^\frac{13}{2}k^{\frac{1}{2}}
+\frac{\omega^{\frac{3}{4}}}{150\rho^7}\left(  \frac{V }{(2\pi)^4}\right)^{\frac{31}{4}}k^{\frac{1}{4}}\\
&-\frac{17423\omega_4}{1843200\rho^8}\left(  \frac{V }{(2\pi)^4}\right)^{9}+\frac{7\omega^{\frac{5}{4}}_4}{1200\rho^9}\left(  \frac{V }{(2\pi)^4}\right)^{\frac{41}{4}}k^{-\frac{1}{4}}.
\end{split}
\end{alignat}

If $l\geq 3$ and $k\geq \max\{k_1,k_2\}$, where $k_1,k_2$ are defined by (\ref{n4k1}) and (\ref{n4k2}),  then
\begin{alignat}{1}\label{4c3}
\begin{split}
\sum_{i=1}^k\lambda_j(\Omega)\geq& \frac{2}{(l+2)\omega_4^{\frac{l}{2}}}\alpha^{-\frac{l}{2}}k^{\frac{l+2}{2}}
+\frac{(2\pi)^8l}{12\omega_4^{\frac{l-1}{2}}V I }\alpha^{\frac{5-l}{2}}k^{\frac{l+1}{2}}\\
&-\frac{128(4l+7)(2l+1)\pi^{12}}{3\omega_4^{\frac{2l-3}{4}}(VI)^{\frac{3}{2}}}\alpha^{\frac{15-2l}{4}}k^{\frac{2l+1}{4}}\\
&+\frac{(2l+3)(2l-1)(l+1)}{12\omega_4^{\frac{l-2}{2}}\rho^4}\alpha^{\frac{10-l}{2}}k^{\frac{l}{2}}\\
&-\frac{(2l+3)(2l+1)(l+1)l}{18\omega_4^{\frac{2l-5}{4}}\rho^5}\alpha^{\frac{25-2l}{4}}k^{\frac{2l-1}{4}}.
\end{split}
\end{alignat}

\end{Theorem}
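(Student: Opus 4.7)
The proof should follow the same strategy as Theorem \ref{CoT2}: obtain an explicit lower bound for the positive root $t$ of $(t+1)^{n+1}-t^{n+1}=Q$, substitute into the universal inequality (\ref{fiolk22}), expand in decreasing powers of $Q$, and then pass from $Q$ back to $k, V, I$ via the definitions of $\alpha$ and $\rho$. For $n=4$ the defining equation is the quartic $5t^4+10t^3+10t^2+5t+1=Q$, and this is the principal new ingredient. I would symmetrize it with the substitution $u=t+\frac{1}{2}$: using the identity $(u+\frac{1}{2})^5-(u-\frac{1}{2})^5=5u^4+\frac{5}{2}u^2+\frac{1}{16}$, the equation collapses to the biquadratic $5u^4+\frac{5}{2}u^2+\frac{1}{16}=Q$, whose positive root is $u^2=-\frac{1}{4}+\frac{1}{2}\sqrt{(4Q+1)/5}$. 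Consequently $t=\sqrt{-\frac{1}{4}+\frac{1}{2}\sqrt{(4Q+1)/5}}-\frac{1}{2}$, and expanding the two nested square roots in $Q^{-1}$ yields both an explicit algebraic lower bound $\hat{t}$ and a leading-order proxy $\tilde{t}_1:=(Q/5)^{1/4}-\frac{1}{2}$, in direct analogy with the objects used for $n=3$.

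With $\hat{t}$ in hand, the three cases are treated in parallel. For $l=1$ and $l=2$, I would substitute $\hat{t}$ directly into $(\hat{t}+1)^{2l+5}-\hat{t}^{2l+5}$, expand by the binomial theorem, collect mixed powers of $Q^{1/4}$, and feed the result into (\ref{fiolk22}); the number of retained correction terms is exactly what is needed to generate (\ref{4c1}) (six terms down to $k^{-1/2}$) and (\ref{4c2}) (seven terms down to $k^{-1/4}$). For $l\geq 3$ the raw expansion contains too many cross-terms to control directly; following the template of Theorem \ref{CoT2}, I would switch to a coarser truncation $\bar{t}$ and require $k\geq k_1$ and $k\geq k_2$ (defined by (\ref{n4k1}) and (\ref{n4k2}) as natural analogues of (\ref{n3k1})--(\ref{n3k2})) so that in each of the binomial expansions of $\bar{t}^{2l+4}$, $\bar{t}^{2l+3}$, $\bar{t}^{2l+2}$, and of the corresponding powers of $\tilde{t}_1$, every higher-order term is dominated by its predecessor. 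These restrictions eliminate the unwanted cross-terms and leave the five dominant contributions in (\ref{4c3}).

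The main obstacle is combinatorial bookkeeping rather than analysis. The quartic root introduces an extra nested square root, so that the asymptotic expansion of $t$ involves fractional powers $Q^{j/4}$ rather than $Q^{j/3}$, and the Taylor remainders of both square roots must be estimated uniformly using the universal lower bound $Q\geq Q_0$ that holds for $n=4$. Matching the precise rational coefficients in (\ref{4c1})--(\ref{4c3})---notably $\frac{47\omega_4}{114688(2\pi)^4}$, $\frac{17423\omega_4}{1843200\rho^8}$, and the $l$-dependent factor $\frac{128(4l+7)(2l+1)\pi^{12}}{3}$---requires choosing the truncation depth of $\hat{t}$ consistently across the three cases and selecting the thresholds $k_1,k_2$ so that precisely the desired terms survive. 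This is tedious but mechanical, and it is the step where most of the effort lies.
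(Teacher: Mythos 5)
Your overall strategy matches the paper's: bound the positive root of the degree-$(n+1)$ critical equation from below, feed the bound into \eqref{fiolk22}, expand binomially, and for $l\geq 3$ impose thresholds $k\geq k_1,k_2$ so that successive pairs of terms in the expansion can be absorbed. Where you genuinely depart from the paper is in how the root bound is obtained for $n=4$. The paper symmetrizes to $5t^4+\frac{5}{2}t^2+\frac{1}{16}=Q$ but then only uses the crude upper bound $t\leq t_1:=(Q/5)^{1/4}$ and performs an iterative, Newton-type refinement: write $t=t_1-c_1$, bracket the residual into three groups to control $c_1$, then repeat with corrections $c_2=\frac{3}{640}t_1^{-3}$ and $c_3=\frac{1}{600}t_1^{-6}$, verifying the sign of the residual at each stage. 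Your proposal instead observes that the equation is biquadratic in the symmetrized variable, so the root is available in closed form as $u=\sqrt{-\frac{1}{4}+\frac{1}{2}\sqrt{(4Q+1)/5}}$, and the lower bound can be extracted by expanding the two nested square roots. This is cleaner and more parallel to the paper's own treatment of $n=3$ (where the Cardano formula is used), and it would reproduce the first two correction coefficients $-\frac{1}{8}t_1^{-1}$ and $\frac{3}{640}t_1^{-3}$ directly. The caveat is that a Taylor truncation does not automatically give a one-sided bound: you would need explicit remainder control on both nested square roots uniformly in $Q\geq Q_0$, whereas the paper's iteration produces one-sided inequalities by construction. (Note also that the paper's final correction is $\frac{1}{600}t_1^{-6}$ rather than the Taylor term $\frac{3}{5120}t_1^{-5}$; it is a deliberately weakened bound chosen for sign control, which your closed-form expansion would have to accommodate.) Beyond this, your description of the $l\geq 3$ case — switching to the coarser truncation $\theta=-\frac{1}{2}+t_1-\frac{1}{8t_1}$ and choosing $k_1,k_2$ to dominate cross-terms — is exactly what the paper does, so the rest of the proof is the same.
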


\begin{Proofc}When $n=4$, we consider the following critical equation
\begin{alignat}{1}\label{efo}
\left(t+\frac{1}{2}\right)^{5}-\left(t-\frac{1}{2}\right)^{5}=Q
\end{alignat}
which is equals to $5t^4+5t^2/2+\ {5}/{80}=Q.$ Therefore, the positive root of (\ref{efo}) satisfies $t\leq t_1:=(  Q/5)^{ 1/4}$. Let $c_1:=t_1-t$, we have
\begin{alignat*}{1}
5(t_1-c_1)^4+\frac{5}{2}(t_1-c_1)^2+\frac{5}{80}=Q.
\end{alignat*}
By direct calculation, we get
\begin{alignat}{1}\label{fc1}
\left(\frac{5}{2}c_1^2-20c_1^3t_1\right)+\left(30c_1^2t_1^2-5c_1t_1+\frac{5}{80}+5c_1^4\right)+\left(\frac{5}{2}t_1^2-20c_1t^3_1\right)=0.
\end{alignat}

Firstly, we deal with the first bracket on the left side of (\ref{fc1}).  Since $t>0$, we infer that
\begin{alignat}{1}\label{doc11}
-20c_1^3t_1+\frac{5}{2}c_1^2\leq -\frac{35}{2}c_1^3t_1.
\end{alignat}

Secondly, we will consider the second bracket.  Without loss of generality, we further restrict $c_1$ as $c_1\leq ( t_1/10)^{1/3}$
and
\begin{alignat}{1}\label{doc12}
\frac{9+\sqrt{51}}{120} \frac{1}{t_1} \leq c_1\leq \frac{9+\sqrt{51}}{120} \frac{1}{t_1},
\end{alignat}
then, we obtain $-0.5c_1t_1+5c_1^4\leq 0, 30c_1^2t_1^2-9c_1t_1/2+{5}/{80}\leq 0,$ which imply
\begin{alignat*}{1}
\left(30c_1^2t_1^2-5c_1t_1+\frac{5}{80}+5c_1^4\right)\leq 0.
\end{alignat*}

Notice that
\begin{alignat}{1}\label{doc13}
\left( \frac{t_1}{10} \right)^{\frac{1}{3}}\geq 0.464 t_1^{\frac{1}{3}}
\end{alignat}
and
\begin{alignat}{1}\label{doc14}
\begin{split}
\frac{9-\sqrt{51}}{120}   \approx 0.015488,\,\,\,\frac{9+\sqrt{51}}{120}  \approx 0.134512.
\end{split}
\end{alignat}

Thirdly, we with deal with the third bracket. By choosing $c_1\geq 1/(8t_1)$,
we conclude that $5t_1^2/2-20c_1t^3_1 \leq0.$ According to the similar discussion in the next section, we can also prove that $t_1>1$. Hence
\begin{alignat}{1}\label{doc15}
\frac{1}{8t_1}\leq 0.464 t_1^{\frac{1}{3}}.
\end{alignat}
Due to (\ref{doc11})-(\ref{doc15}), we conclude that $t=t_1-c_1\geq t_1-1/(8 t_1).$ Hence, we assume
\begin{alignat}{1}\label{fc2}
t\geq t_1-\frac{1}{8 t_1}+c_2,
\end{alignat}
where $c_2$ is a positive number to be determined. Putting (\ref{fc2}) into (\ref{efo}), we get
\begin{alignat}{1}\label{eof2}
\begin{split}
 0=&20c_2t_1^3+30c_2^2t_1^2+\frac{5}{2}\left( 8c_2^3-c_2 \right)t_1+\frac{1}{32}\left(160c_2^4-160c_2^2-3  \right)\\
 &-\frac{15}{32}\frac{c_2^2}{t_1^2}-\frac{5}{16}\left( 8c_2^2-c_2\right)\frac{1}{t_1}-\frac{5}{128}\frac{c_2}{t_1^3}-\frac{5}{4096}\frac{1}{t_1^4}.
\end{split}
\end{alignat}
If we choose $c_2=\frac{3}{640}\frac{1}{t_1^3}$, then the right side of (\ref{eof2}) becomes
\begin{alignat*}{1}
-\frac{3}{256}\frac{1}{t_1^2}+\frac{137}{40960}\frac{1}{t_1^4}-\frac{3}{10240}\frac{1}{t_1^6}+q(t_1),
\end{alignat*}
where
\begin{alignat*}{1}
q(t_1)=O(t_1^{-8})\leq \frac{81}{6553600}\frac{1}{t_1^8}
\end{alignat*}
provide $t_1\geq 1$. Hence, we conclude that $c_2>\frac{3}{640}\frac{1}{t_1^3}$. Therefore, we can further conclude that
\begin{alignat}{1}\label{fc3}
t\geq t_1-\frac{1}{8 t_1}+\frac{3}{640}\frac{1}{t_1^3}+c_3,
\end{alignat}
where $c_3$ is a positive number. Let $c_3={1}/(600t_1^6)$ and put $t^{*}:=t_1-1/({8 t_1})+{3}/(640 t_1^3)+c_3$
into the critical equation, we arrive at
\begin{alignat}{1}\label{eoq2}
\begin{split}
Q_1=5t_1^4-\frac{3}{256}\frac{1}{t_1^2}+\frac{2}{65}\frac{1}{t_1^4}+\frac{137}{40960}\frac{1}{t_1^4}+q_2(t),
\end{split}
\end{alignat}
where
\begin{alignat*}{1}
q_2(t)=O(t_1^{-5})\leq -\frac{1}{260}\frac{1}{t_1^5},
\end{alignat*}
provide $t_1\geq 2.5$. Combing with
\begin{alignat*}{1}
\begin{split}
\frac{3}{256}\frac{1}{t_1^2}\geq \frac{2}{65}\frac{1}{t_1^4}
\end{split}
\end{alignat*}
for $t_1\geq 2.5$, we conclude that
\begin{alignat*}{1}
t\geq t_1-\frac{1}{8 t_1}+\frac{3}{640}\frac{1}{t_1^3}+\frac{1}{600}\frac{1}{t_1^6}.
\end{alignat*}

Let $\eta=t-\frac{1}{2}$, then $\eta$ satisfies
\begin{alignat*}{1}
(\eta+1)^5-\eta^5=Q
\end{alignat*}
and
\begin{alignat}{1}\label{4t1}
\eta\geq -\frac{1}{2}+t_1-\frac{1}{8 t_1}+\frac{3}{640}\frac{1}{t_1^3}+\frac{1}{600}\frac{1}{t_1^6}.
\end{alignat}

For convenience, we define
\begin{alignat*}{1}
t_2=t_1-\frac{1}{2}= \frac{1}{5^{\frac{1}{4}}}Q^{\frac{1}{4}}-\frac{1}{2}.
\end{alignat*}

\textbf{Case I.} If $l=1$, we have
\begin{alignat*}{1}
(\eta+1)^7-\eta^7\geq& 7\left( \frac{Q}{5}\right)^{\frac{3}{2}}+\frac{7}{2}\left( \frac{Q}{5}\right)-\frac{49}{40}\left( \frac{Q}{5}\right)^{\frac{1}{2}}+\frac{141}{512}\\
&+\frac{7}{100}\left( \frac{Q}{5}\right)^{-\frac{1}{4}}-\frac{3143}{81920}\left( \frac{Q}{5}\right)^{-\frac{1}{2}}.
\end{alignat*}
By using the above estimate and (\ref{fiolk22}), we get (\ref{4c1}).

\textbf{Case II.} If $l=2$, we have
\begin{alignat*}{1}
(\eta+1)^9-\eta^9\geq& 9\left( \frac{Q}{5}\right)^{2}+12\left( \frac{Q}{5}\right)^{\frac{3}{2}}-\frac{18}{5}\left( \frac{Q}{5}\right)+\frac{549}{640}\left( \frac{Q}{5}\right)^{\frac{1}{2}}\\
&+\frac{3}{25}\left( \frac{Q}{5}\right)^{\frac{1}{4}}-\frac{17423}{102400}+\frac{21}{200}\left( \frac{Q}{5}\right)^{-\frac{1}{4}}.
\end{alignat*}
Using again (\ref{fiolk22}), we get (\ref{4c2})

\textbf{Case III.} If $l\geq3$, for convenience, we define
\begin{alignat*}{1}
\theta=-\frac{1}{2}+ t_1-\frac{1}{8 t_1}.
\end{alignat*}
Then we have
\begin{alignat*}{1}
(\eta+1)^{2l+5}-\eta^{2l+5}\geq (\theta+1)^{2l+5}-\theta^{2l+5}
\end{alignat*}
By direct calculation, we have
\begin{alignat*}{1}
\theta^{2l+4}=\left(t_1-\frac{1}{2}  \right)^{2l+4}-\frac{C^1_{2l+4}}{8t_1}\left(t_1-\frac{1}{2}  \right)^{2l+3}+\cdots,\\
\theta^{2l+3}=\left(t_1-\frac{1}{2}  \right)^{2l+3}-\frac{C^1_{2l+3}}{8t_1}\left(t_1-\frac{1}{2}  \right)^{2l+2}+\cdots,\\
\theta^{2l+2}=\left(t_1-\frac{1}{2}  \right)^{2l+2}-\frac{C^1_{2l+2}}{8t_1}\left(t_1-\frac{1}{2}  \right)^{2l+1}+\cdots.
\end{alignat*}
Furthermore, we also have the following equations
\begin{alignat*}{1}
t_2^{2l+4}=&\left( \frac{Q}{5} \right)^{\frac{2l+4}{4}}-\frac{C^1_{2l+4}}{2}\left( \frac{Q}{5} \right)^{\frac{2l+3}{4}}+\frac{C^2_{2l+4}}{4}\left( \frac{Q}{5} \right)^{\frac{2l+2}{4}}-\frac{C^3_{2l+4}}{8}\left( \frac{Q}{5} \right)^{\frac{2l+1}{4}}+\cdots,\\
t_2^{2l+3}=&\left( \frac{Q}{5} \right)^{\frac{2l+3}{4}}-\frac{C^1_{2l+3}}{2}\left( \frac{Q}{5} \right)^{\frac{2l+2}{4}}+\frac{C^2_{2l+3}}{4}\left( \frac{Q}{5} \right)^{\frac{2l+1}{4}}-\frac{C^3_{2l+3}}{8}\left( \frac{Q}{5} \right)^{\frac{2l}{4}}+\cdots,\\
t_2^{2l+2}=&\left( \frac{Q}{5} \right)^{\frac{2l+2}{4}}-\frac{C^1_{2l+2}}{2}\left( \frac{Q}{5} \right)^{\frac{2l+1}{4}}+\frac{C^2_{2l+1}}{4}\left( \frac{Q}{5} \right)^{\frac{2l}{4}}-\frac{C^3_{2l+2}}{8}\left( \frac{Q}{5} \right)^{\frac{2l-1}{4}}+\cdots.
\end{alignat*}

Hence, we can derive that
\begin{alignat*}{1}
(\theta+1)^{2l+5}-\theta^{2l+5}=&C^1_{2l+5}t_2^{2l+4}-\frac{C^1_{2l+5}C^1_{2l+4}}{8t_1}t_2^{2l+3}
+C^2_{2l+5}t_2^{2l+3}\\
&-\frac{C^2_{2l+5}C^1_{2l+3}}{8t_1}t_2^{2l+2}+C^3_{2l+5}t_2^{2l+2}-\frac{C^3_{2l+5}C^1_{2l+2}}{8t_1}t_2^{2l+1}+\cdots.
\end{alignat*}

If $k$ satisfies $k\geq k_1$, where
\begin{alignat}{1}\label{n4k1}
 k_1:=\frac{1}{455} \max_{1\leq i\leq 2l+3} \bigg\{ \left(\frac{C^{i+1}_{2l+4}}{27C^{i}_{2l+4}} +\frac{1}{2}\right)^4,\left(\frac{C^{i+1}_{2l+3}}{27C^{i}_{2l+3}} +\frac{1}{2}\right)^4 ,\left(\frac{C^{i+1}_{2l+2}}{27C^{i}_{2l+2}} +\frac{1}{2}\right)^4   \bigg\},
\end{alignat}
we get
\begin{alignat}{1}\label{n4lg3l2}
\begin{split}
(\theta+1)^{2l+5}-\theta^{2l+5}\geq&C^1_{2l+5}t_2^{2l+4}-\frac{C^1_{2l+5}C^1_{2l+4}}{8t_1}t_1^{2l+3}
+C^2_{2l+5}t_2^{2l+3}\\
&-\frac{C^2_{2l+5}C^1_{2l+3}}{8t_1}t_1^{2l+2}+C^3_{2l+5}t_2^{2l+2}-\frac{C^3_{2l+5}C^1_{2l+2}}{8t_1}t_1^{2l+1}.
\end{split}
\end{alignat}
Due to $Q\geq 2275$ when $n=4$, if we choose $k\geq k_2$, where
\begin{alignat}{1}\label{n4k2}
k_2:=\frac{1}{455} \max_{1\leq i\leq 2l+3}\bigg\{ \left( \frac{C^{i+1}_{2l+4}}{2C^{i}_{2l+4}} \right)^{4} , \left( \frac{C^{i+1}_{2l+3}}{2C^{i}_{2l+3}}\right)^{4},
  \left( \frac{C^{i+1}_{2l+2}}{2C^{i}_{2l+2}} \right)^4\bigg\},
\end{alignat}
then
\begin{alignat}{1}\label{n4lg3l1}
\begin{split}
t_2^{2l+4}\geq&\left( \frac{Q}{5} \right)^{\frac{2l+4}{4}}-\frac{C^1_{2l+4}}{2}\left( \frac{Q}{5} \right)^{\frac{2l+3}{4}}
+\frac{C^2_{2l+4}}{4}\left( \frac{Q}{5} \right)^{\frac{2l+2}{4}}-\frac{C^3_{2l+4}}{8}\left( \frac{Q}{5} \right)^{\frac{2l+1}{4}},\\
t_2^{2l+3}\geq&\left( \frac{Q}{5} \right)^{\frac{2l+3}{4}}-\frac{C^1_{2l+3}}{2}\left( \frac{Q}{5} \right)^{\frac{2l+2}{4}}
+\frac{C^2_{2l+3}}{4}\left( \frac{Q}{5} \right)^{\frac{2l+1}{4}}-\frac{C^3_{2l+3}}{8}\left( \frac{Q}{5} \right)^{\frac{2l}{4}},\\
t_2^{2l+2}\geq&\left( \frac{Q}{5} \right)^{\frac{2l+2}{4}}-\frac{C^1_{2l+2}}{2}\left( \frac{Q}{5} \right)^{\frac{2l+1}{4}}
+\frac{C^2_{2l+2}}{4}\left( \frac{Q}{5} \right)^{\frac{2l}{4}}-\frac{C^3_{2l+2}}{8}\left( \frac{Q}{5} \right)^{\frac{2l-1}{4}}.
\end{split}
\end{alignat}
From (\ref{n4lg3l2}) and (\ref{n4lg3l1}), we infer that
\begin{alignat*}{1}
\begin{split}
(\theta+1)^{2l+5}-\theta^{2l+5}\geq&C^1_{2l+5}\left( \frac{Q}{5} \right)^{\frac{2l+4}{4}}+\frac{(2l+5)(l+2)l}{6}\left( \frac{Q}{5}   \right)^{\frac{2l+2}{4}}\\
&-\frac{(4l+7)(2l+5)(2l+3)(l+2)}{24}\left( \frac{Q}{5} \right)^{\frac{2l+1}{4}}\\
&+\frac{(2l+5)(2l+3)(2l-1)(l+2)(l+1)}{24}\left( \frac{Q}{5} \right)^{\frac{2l}{4}}\\
&-\frac{(2l+5)(2l+3)(2l+1)(l+2)(l+1)l}{36}\left( \frac{Q}{5} \right)^{\frac{2l-1}{4}}.
\end{split}
\end{alignat*}

Finally, combing with (\ref{fiolk22}), we get (\ref{4c3}).
\cvd
\end{Proofc}

\begin{Remark}
Using the similar discussion, when $n=5,$ the root of $(\eta+1)^6-\eta^6=Q$ satisfies
\begin{alignat}{1}\label{5t1}
\eta>-\frac{1}{2}+t_1-   \frac{1}{6 t_1}+\frac{11}{720t^3_1}\cdot\frac{99999}{100000}.
\end{alignat}
Similarly, when $n=6$, the root of $(\eta+1)^7-\eta^7=Q$ satisfies
\begin{alignat}{1}\label{6t1}
\eta>-\frac{1}{2}+t_1-\frac{5}{24}\frac{1}{t_1}+\frac{13}{384}\frac{1}{t_1^3}-\frac{1}{222}\frac{1}{t_1^5}
\end{alignat}
where $t_1=\left( Q/(n+1)\right)^{1/n}$.

\end{Remark}

\section{In high dimensional cases}

In this section, we will complete the proof of Theorem \ref{MT11}. First, we aim to find the lower bound of the solution $t$ to the following high order equation for $n\geq5$
\begin{alignat*}{1}
\left(t+\frac{1}{2}\right)^{n+1}-\left(t-\frac{1}{2}\right)^{n+1}=Q.
\end{alignat*}
Obviously the above equation is equals to
\begin{alignat}{1}\label{TT}
(n+1)\left(t^n+\frac{n(n-1)}{24}t^{n-2}+\frac{n(n-1)(n-2)(n-3)}{1920}t^{n-4}  +\cdots\right)=Q.
\end{alignat}
Since $(n+1)t^n\leq Q$, we have $t\leq  ( Q/(n+1) )^{1/n}$. Hence, we can assume
\begin{alignat*}{1}
t=\left(\frac{Q}{n+1}  \right)^{\frac{1}{n}}-C_1.
\end{alignat*}
If we consider the first three terms in (\ref{TT}), then we obtain
\begin{alignat}{1}\label{TT1}
(n+1)\Bigg(t^n+\frac{n(n-1)}{24}t^{n-2}+\frac{n(n-1)(n-2)(n-3)}{1920}t^{n-4}+\cdots  \Bigg)=Q.
\end{alignat}
Putting $t$ into (\ref{TT1}), we get
\begin{alignat*}{1}
\frac{Q}{n+1}= &\frac{Q}{n+1}-\left(\frac{Q}{n+1}  \right)^{\frac{n-1}{n}}nC_1+\frac{n(n-1)}{24}\left(\frac{Q}{n+1}  \right)^{\frac{n-2}{n}}\\
&+\frac{n(n-1)(n-2)(n-3)}{1920}\left(\frac{Q}{n+1}  \right)^{\frac{n-4}{n}}+\frac{n(n-1)}{2}\left(\frac{Q}{n+1}  \right)^{\frac{n-2}{n}}C^2_1\\
&-\frac{n^2(n-1)}{24}\left(\frac{Q}{n+1}  \right)^{\frac{n-3}{n}}C_1+\cdots,
\end{alignat*}
Let $f_1$ be the low order terms of $\left(\frac{Q}{n+1}  \right)^{\frac{n-6}{n}}$ on the right hand of the above equation. Hence, when
\begin{alignat*}{1}
-\left(\frac{Q}{n+1}  \right)^{\frac{n-1}{n}}nC_1+\frac{n(n-1)}{24}\left(\frac{Q}{n+1}  \right)^{\frac{n-2}{n}}\geq 0,
\end{alignat*}
we get
\begin{alignat*}{1}
C_1\leq \frac{n-1}{24}\left(\frac{Q}{n+1}  \right)^{-\frac{1}{n}}.
\end{alignat*}
If we choose $C_1=\frac{n-1}{24}\left(\frac{Q}{n+1}  \right)^{-\frac{1}{n}}$, combing the above inequality with the following formula
\begin{alignat*}{1}
-\frac{n(n-1)^3}{2880} =& \frac{n(n-1)^3}{1920} +\frac{n(n-1)^3}{1152} -\frac{n(n-1)^3}{576}\\
\geq& \frac{n(n-1)(n-2)(n-3)}{1920} +\frac{n(n-1)^3}{1152} -\frac{n^2(n-1)^2}{576},
\end{alignat*}
we have
\begin{alignat*}{1}
0\geq-\frac{n(n-1)^3}{2880}\left(\frac{Q}{n+1}  \right)^{\frac{n-4}{n}}\geq& \frac{n(n-1)(n-2)(n-3)}{1920}\left(\frac{Q}{n+1}  \right)^{\frac{n-4}{n}}\\
&+\frac{n(n-1)}{2}\left(\frac{Q}{n+1}  \right)^{\frac{n-2}{n}}C^2_1\\
&-\frac{n^2(n-1)}{24}\left(\frac{Q}{n+1}  \right)^{\frac{n-3}{n}}C_1 .
\end{alignat*}

Since
\begin{alignat*}{1}
f_1\leq \max_{6\leq i\leq n+1}C^i_{n+1} \cdot\sum^{n+1}_{j=6}\frac{1}{2^{j-1}}\left(\frac{Q}{n+1}  \right)^{\frac{n-j}{n}},
\end{alignat*}
then, under the assumption $k\geq \tilde{k}_1^{\frac{n}{2}}$, where
\begin{alignat}{1}\label{ng7k1}
\tilde{k}_1:=\frac{9375}{149n(n-1)^3}\max_{4\leq i\leq n+1}C^i_{n+1},
\end{alignat}
we conclude that
\begin{alignat*}{1}
0\geq&-\frac{n(n-1)^3}{2880}\left(\frac{Q}{n+1}  \right)^{\frac{n-4}{n}}+f_1.
\end{alignat*}
The above inequality implies
\begin{alignat}{1}\label{rofge}
t=\left(\frac{Q}{n+1}  \right)^{\frac{1}{n}}- \frac{n-1}{24}\left(\frac{Q}{n+1}  \right)^{-\frac{1}{n}}+C_2,
\end{alignat}
where $C_2$ is a positive constant. Putting (\ref{rofge}) into (\ref{TT}), we get
\begin{alignat}{1}\label{c2fg}
\begin{split}
\frac{Q}{n+1}=&R_1+\frac{n(n-1)}{24}R_2+\frac{n(n-1)(n-2)(n-3)}{1920}R_3\\
&+\frac{n(n-1)(n-2)\cdots(n-5)}{322560}R_4+\cdots,
\end{split}
\end{alignat}
where
\begin{alignat*}{1}
R_1:=&\left[\left(\frac{Q}{n+1}  \right)^{\frac{1}{n}}- \frac{n-1}{24}\left(\frac{Q}{n+1}  \right)^{-\frac{1}{n}}+C_2  \right]^n,\\
R_2:=&\left[\left(\frac{Q}{n+1}  \right)^{\frac{1}{n}}- \frac{n-1}{24}\left(\frac{Q}{n+1}  \right)^{-\frac{1}{n}}+C_2  \right]^{n-2},\\
R_3:=&\left[\left(\frac{Q}{n+1}  \right)^{\frac{1}{n}}- \frac{n-1}{24}\left(\frac{Q}{n+1}  \right)^{-\frac{1}{n}}+C_2  \right]^{n-4},\\
R_4:=&\left[\left(\frac{Q}{n+1}  \right)^{\frac{1}{n}}- \frac{n-1}{24}\left(\frac{Q}{n+1}  \right)^{-\frac{1}{n}}+C_2  \right]^{n-6}.
\end{alignat*}
By direct calculation, we get
\begin{alignat*}{1}
R_1=&\frac{Q}{n+1}-\frac{n(n-1)}{24}\left(\frac{Q}{n+1}  \right)^{\frac{n-2}{n}}+\frac{n(n-1)^3}{1152}\left(\frac{Q}{n+1}  \right)^{\frac{n-4}{n}}\\
&-\frac{n(n-1)^4(n-2)}{82944}\left(\frac{Q}{n+1}  \right)^{\frac{n-6}{n}}+n\left(\frac{Q}{n+1}  \right)^{\frac{n-1}{n}}C_2+F_1,\\
R_2=&\left(\frac{Q}{n+1}  \right)^{\frac{n-2}{n}}-\frac{(n-1)(n-2)}{24}\left(\frac{Q}{n+1}  \right)^{\frac{n-4}{n}}+n\left(\frac{Q}{n+1}  \right)^{\frac{n-3}{n}}C_2\\
&+\frac{(n-1)^2(n-2)(n-3)}{1152}\left(\frac{Q}{n+1}  \right)^{\frac{n-6}{n}}+F_2
\end{alignat*}
and
\begin{alignat*}{1}
R_3=&\left(\frac{Q}{n+1}  \right)^{\frac{n-4}{n}}-\frac{(n-1)(n-4)}{24}\left(\frac{Q}{n+1}  \right)^{\frac{n-6}{n}}+F_3,\\
R_4=&\left(\frac{Q}{n+1}  \right)^{\frac{n-6}{n}}+F_4.
\end{alignat*}
where $F_1,F_2,F_3$ and $F_4$ are low order terms of $\left(\frac{Q}{n+1}  \right)^{\frac{n-6}{n}}$. By utilizing the equations of $R_1,R_2,R_3,R_4$ and (\ref{c2fg}), we have
\begin{alignat*}{1}
\begin{split}
\frac{Q}{n+1}=&R_1+\frac{n(n-1)}{24}R_2+\frac{n(n-1)(n-2)(n-3)}{1920}R_3\\
&+\frac{n(n-1)(n-2)(n-3)(n-4)(n-5)}{322560}R_4+\cdots,
\end{split}
\end{alignat*}
which implies
\begin{alignat}{1}\label{c2fg1}
\begin{split}
0=&-\frac{n(n-1)}{24}\left(\frac{Q}{n+1}  \right)^{\frac{n-2}{n}}+\frac{n(n-1)}{24}\left(\frac{Q}{n+1}  \right)^{\frac{n-2}{n}}\\
&+\left(\frac{n(n-1)^3}{1152}-\frac{n(n-1)^2(n-2)}{576}+\frac{n(n-1)\cdots(n-3)}{1920}\right)\left(\frac{Q}{n+1}  \right)^{\frac{n-4}{n}}\\
&+n\left(\frac{Q}{n+1}  \right)^{\frac{n-1}{n}}C_2+(n-1)^2\left(\frac{n\cdots(n-3)}{27648}-\frac{n(n-1)^2(n-2)}{82944} \right)\left(\frac{Q}{n+1}  \right)^{\frac{n-6}{n}}\\
&+\left( \frac{n(n-1)(n-2)\cdots(n-5)}{322560}-\frac{n(n-1)^2(n-2)(n-3)(n-4)}{46080}  \right)  \left(\frac{Q}{n+1}  \right)^{\frac{n-6}{n}}\\
&+\frac{n^2(n-1)}{24}\left(\frac{Q}{n+1}  \right)^{\frac{n-3}{n}}C_2+F_1+\frac{n(n-1)}{24}F_2+\frac{n(n-1)(n-2)(n-3)}{1920}F_3\\
&+\frac{n(n-1)\cdots(n-5)}{322560}F_4+\cdots.
\end{split}
\end{alignat}
Let $f_2$  be  the low order terms of $\left(\frac{Q}{n+1}  \right)^{\frac{n-6}{n}}$ on the right hand of $(\ref{c2fg1})$. Obviously, we have
\begin{alignat*}{1}
f_2\leq \max_{8\leq i\leq n+1}C^i_{n+1} \cdot\sum^{n+1}_{j=8}\frac{1}{2^{j}}\left(\frac{Q}{n+1}  \right)^{\frac{n-j}{n}}.
\end{alignat*}
From the third and fourth terms on the right hand of (\ref{c2fg1}), we conclude that
\begin{alignat*}{1}
\lim_{k\rightarrow\infty} C_2\left(\frac{Q}{n+1}  \right)^{\frac{3}{n}}=\tilde{C}_2,
\end{alignat*}
where $\tilde{C}_2$ is a bounded positive constant.  In fact, when
\begin{alignat*}{1}
C_2+\left(\frac{(n-1)^3}{1152}-\frac{(n-1)^2(n-2)}{576}+\frac{(n-1)(n-2)(n-3)}{1920}\right)\left(\frac{Q}{n+1}  \right)^{\frac{-3}{n}}\leq 0,
\end{alignat*}
we get
\begin{alignat*}{1}
C_2\leq \frac{(n-1)(n-3)(2n+1)}{5760}\left(\frac{Q}{n+1}  \right)^{\frac{-3}{n}}.
\end{alignat*}
If we choose $C_2=\frac{(n-1)(n-3)(2n+1)}{5760}\left(\frac{Q}{n+1}  \right)^{\frac{-3}{n}}$, combining with
\begin{alignat*}{1}
(n-1)^2\left(\frac{n\cdots(n-3)}{27648}-\frac{n(n-1)^2(n-2)}{82944} \right) =-\frac{2n(n-1)\cdots(n-4)(3n-1)}{322560}
\end{alignat*}
and
\begin{alignat*}{1}
\frac{n^2(n-1)}{24}\cdot\frac{(n-1)(n-3)(2n+1)}{5760}=\frac{n^2(n-1)^2(n-3)(2n+1)}{138240},
\end{alignat*}
we get
\begin{alignat}{1}\label{efc2o6}
\begin{split}
\big(r_1&+r_2+r_3\big) \left(\frac{Q}{n+1}  \right)^{\frac{n-6}{n}}\\
=&\frac{n(2n+1)(n-1)(29n^3-116n^2+31n+128)}{2903040} \left(\frac{Q}{n+1}  \right)^{\frac{n-6}{n}},
\end{split}
\end{alignat}
where
\begin{alignat*}{1}
r_1=&\frac{2n(n-1)^3(n-2)(n-4)}{82944},\\
r_2=&-\frac{2n(n-1)\cdots(n-4)(3n-1)}{322560},\\
r_3=&\frac{n^2(n-1)^2(n-3)(2n+1)}{138240}.
\end{alignat*}
Obviously, the right hand of (\ref{efc2o6}) is positive when $n\geq 5$. Therefore, if $k>>n$, then we infer that
\begin{alignat*}{1}
C_2<\frac{(n-1)(n-3)(2n+1)}{5760}\left(\frac{Q}{n+1}  \right)^{\frac{-3}{n}}.
\end{alignat*}
Hence, we can choose
\begin{alignat*}{1}
C_2=\frac{99999}{100000}\frac{(n-1)(n-3)(2n+1)}{5760}\left(\frac{Q}{n+1}  \right)^{\frac{-3}{n}},
\end{alignat*}

Let
\begin{alignat*}{1}
\tilde{c}_1:=&\frac{(n-1)(n-3)(2n+1)}{576000000},\\
\tilde{c}_2:=&\frac{n(2n+1)(n-1)(29n^3-116n^2+31n+128)}{2903040},
\end{alignat*}
when $k\geq\tilde{k}_2^{\frac{n}{2}}$, where
\begin{alignat}{1}\label{ng7k2}
\tilde{k}_2:=\frac{50}{317\tilde{c}_1}\max_{6\leq i\leq n+1}C^i_{n+1},
\end{alignat}
we get
\begin{alignat*}{1}
\tilde{c}_1 \left(\frac{Q}{n+1}  \right)^{\frac{2}{n}}\geq \max_{6\leq i\leq n}C^i_{n+1} \cdot\sum^{n+1}_{j=6}\frac{1}{2^{j-1}}\left(\frac{Q}{n+1}  \right)^{\frac{6-j}{n}} \geq\tilde{c}_2+ \left(\frac{Q}{n+1}  \right)^{\frac{6-n}{n}}f_2.
\end{alignat*}
Therefore, if $k>\max\{\tilde{k}_1^{\frac{n}{2}},\tilde{k}_2^{\frac{n}{2}}\}$, there holds
\begin{alignat}{1}\label{rofge22}
\begin{split}
t\geq &\left(\frac{Q}{n+1}  \right)^{\frac{1}{n}}- \frac{n-1}{24}\left(\frac{Q}{n+1}  \right)^{-\frac{1}{n}}\\
&+\frac{99999}{100000}\frac{(n-1)(n-3)(2n+1)}{5760}\left(\frac{Q}{n+1}  \right)^{\frac{-3}{n}}.
\end{split}
\end{alignat}

Let $\eta=t-\frac{1}{2}$, then $(\eta+1)^{n+1}-\eta^{n+1}=Q$ and
\begin{alignat}{1}\label{7t1}
\begin{split}
\eta>&-\frac{1}{2}+\left(\frac{Q}{n+1}  \right)^{\frac{1}{n}}- \frac{n-1}{24}\left(\frac{Q}{n+1}  \right)^{-\frac{1}{n}}\\
&+\frac{99999}{100000}\frac{(n-1)(n-3)(2n+1)}{5760}\left(\frac{Q}{n+1}  \right)^{\frac{-3}{n}}.
\end{split}
\end{alignat}

For convenience, we define
\begin{alignat*}{1}
\theta=&t_2- \frac{n-1}{24}\bar{t}^{-1},\,\,\,\bar{t}=\left(\frac{Q}{n+1}  \right)^{\frac{1}{n}},\,\,\,t_2=-\frac{1}{2}+\bar{t}.
\end{alignat*}
Therefore, we conclude that
\begin{alignat*}{1}
(\eta+1)^{2l+n+1}-\eta^{2l+n+1}\geq(\theta+1)^{2l+n+1}-\theta^{2l+n+1}.
\end{alignat*}

By direct calculation, we have
\begin{alignat*}{1}
 \theta^{2l+n}=&t_2^{ 2l+n }-\frac{(n-1)C^1_{2l+n}}{24\bar{t}}t_2^{ 2l+n-1 }+\cdots,\\
 \theta^{2l+n-1}=&t_2^{ 2l+n-1 }-\frac{(n-1)C^1_{2l+n-1}}{24\bar{t}}t_2^{ 2l+n-2 }+\cdots,\\
 \theta^{2l+n-2}=&t_2^{ 2l+n-2 }-\frac{(n-1)C^1_{2l+n-2}}{24\bar{t}}t_2^{ 2l+n-3 }+\cdots
\end{alignat*}
and
\begin{alignat*}{1}
t_2^{ 2l+n }=&\bar{t}^{ {2l+n} }-\frac{C^1_{2l+n}}{2}\bar{t}^{ {2l+n-1} }
+\frac{C^2_{2l+n}}{4}\bar{t}^{ {2l+n-2} }-\frac{C^3_{2l+n}}{8}\bar{t}^{ {2l+n-3} }+\cdots,\\
t_2^{2l+n -1}=&\bar{t}^{ {2l+n-1} }-\frac{C^1_{2l+n-1}}{2}\bar{t}^{ {2l+n-2} }
+\frac{C^2_{2l+n-1}}{4}\bar{t}^{ {2l+n-3} }-\frac{C^3_{2l+n-1}}{8}\bar{t}^{ {2l+n-4} }+\cdots,\\
t_2^{ 2l+n-2 }=&\bar{t}^{ {2l+n-2} }-\frac{C^1_{2l+n-2}}{2}\bar{t}^{ {2l+n-3} }
+\frac{C^2_{2l+n-2}}{4}\bar{t}^{ {2l+n-4} }-\frac{C^3_{2l+n-2}}{8}\bar{t}^{ {2l+n-5} }+\cdots.
\end{alignat*}

Moreover, we also have
\begin{alignat*}{1}
(\theta+1)^{2l+n+1}-\theta^{2l+n+1}=&C^1_{2l+n+1} t_2^{ 2l+n }-\frac{(n-1)C^1_{2l+n+1}C^1_{2l+n}}{24\bar{t}}t_2^{ 2l+n-1 }\\
&+C^2_{2l+n+1}t_2^{ 2l+n-1 }-\frac{(n-1)C^2_{2l+n+1}C^1_{2l+n-1}}{24\bar{t}}t_2^{ 2l+n-2 }\\
&+C^3_{2l+n+1}t_2^{ 2l+n-2 }-\frac{(n-1)C^3_{2l+n+1}C^1_{2l+n-2}}{24\bar{t}}t_2^{ 2l+n-3 }\\
&+\cdots.
\end{alignat*}
If we choose $k\geq \tilde{k}_3$, where
\begin{alignat}{1}\label{gnglk3}
\tilde{k}_3:=&\frac{\omega_n^2}{(4\pi)^n}\left( \frac{n+2}{n}\right)^{\frac{n}{2}}\max_{1\leq i\leq 2l+n-1} \{c_{i,2l+n},c_{i,2l+n-1},c_{i,2l+n-2} \}
\end{alignat}
and
\begin{alignat*}{1}
c_{i,2l+n}=& \left(\frac{(n-1)}{24\bar{t}_1}\frac{ C^{i+1}_{2l+n}}{C^i_{2l+n}}+\frac{1}{2} \right)^n ,\,\,\,c_{i,2l+n-1}= \left(\frac{(n-1)}{24\bar{t}_1}\frac{ C^{i+1}_{2l+n-1}}{C^i_{2l+n-1}}+\frac{1}{2} \right)^n ,\\
c_{i,2l+n-2}=& \left(\frac{(n-1)}{24\bar{t}_1}\frac{ C^{i+1}_{2l+n-2}}{C^i_{2l+n-2}}+\frac{1}{2} \right)^n,\,\,\,\bar{t}_1=\frac{4\pi}{\omega_n^{\frac{2}{n}}}\left( \frac{n}{n+2} \right)^{\frac{1}{2}},
\end{alignat*}
then, we obtain the following inequality
\begin{alignat}{1}\label{gngl1}
\begin{split}
(\theta+1)^{2l+n+1}-\theta^{2l+n+1}\geq&C^1_{2l+n+1} t_2^{ 2l+n }-\frac{(n-1)C^1_{2l+n+1}C^1_{2l+n}}{24} \bar{t}^{  2l+n-2  }   \\
&+C^2_{2l+n+1}t_2^{ 2l+n-1 }-\frac{(n-1)C^2_{2l+n+1}C^1_{2l+n-1}}{24} \bar{t}^{ 2l+n-3 }    \\
&+C^3_{2l+n+1}t_2^{ 2l+n-2 }-\frac{(n-1)C^3_{2l+n+1}C^1_{2l+n-2}}{24}\bar{t}^{ 2l+n-4}.
\end{split}
\end{alignat}

Furthermore, if we restrict $k\geq \tilde{k}_4$, where $\tilde{k}_4$ is defined by
\begin{alignat}{1}\label{gnglk4}
&\frac{\omega_n^2}{(4\pi)^n}\left( \frac{n+2}{n}\right)^{\frac{n}{2}}\max_{1\leq i\leq 2l+n-1} \bigg\{ \left( \frac{C^{i+1}_{2l+n}}{2C^i_{2l+n}} \right)^n ,
 \left( \frac{C^{i+1}_{2l+n-1}}{2C^i_{2l+n-1}} \right)^n, \left( \frac{C^{i+1}_{2l+n-2}}{2C^i_{2l+n-2}} \right)^n \bigg\},
\end{alignat}
then, we infer that
\begin{alignat}{1}\label{gngl2}
\begin{split}
t_2^{ 2l+n }\geq&\bar{t}^{ {2l+n} }-\frac{C^1_{2l+n}}{2}\bar{t}^{ {2l+n-1} }
+\frac{C^2_{2l+n}}{4}\bar{t}^{ {2l+n-2} }-\frac{C^3_{2l+n}}{8}\bar{t}^{ {2l+n-3} },\\
t_2^{2l+n -1}\geq&\bar{t}^{ {2l+n-1} }-\frac{C^1_{2l+n-1}}{2}\bar{t}^{ {2l+n-2} }
+\frac{C^2_{2l+n-1}}{4}\bar{t}^{ {2l+n-3} }-\frac{C^3_{2l+n-1}}{8}\bar{t}^{ {2l+n-4} },\\
t_2^{ 2l+n-2 }\geq&\bar{t}^{ {2l+n-2} }-\frac{C^1_{2l+n-2}}{2}\bar{t}^{ {2l+n-3} }
+\frac{C^2_{2l+n-2}}{4}\bar{t}^{ {2l+n-4} }-\frac{C^3_{2l+n-2}}{8}\bar{t}^{ {2l+n-5} }.
\end{split}
\end{alignat}
Putting (\ref{gngl2}) into (\ref{gngl1}), we conclude that
\begin{alignat*}{1}
\begin{split}
(\theta&+1)^{2l+n+1}-\theta^{2l+n+1}\\
\geq&C^1_{2l+n+1}\bar{t}^{ {2l+n} }+\frac{(2l+n+1)(2l+n)l}{12}\bar{t}^{ { 2l+n-2 } }\\
&-\frac{(12l+7n-13)(2l+n+1)(2l+n)(2l+n-1)}{48} \bar{t}^{ { 2l+n-3} }\\
&-\frac{(2l+2n-3)(2l+n+1)(2l+n)(2l+n-1)(2l+n-2)}{96}\bar{t}^{ {2l+n-4} }\\
&-\frac{(2l+n+1)(2l+n)(2l+n-1)(2l+n-2)(2l+n-3)(2l+n-4)}{288}\bar{t}^{ {2l+n-5}}.
\end{split}
\end{alignat*}

According to (\ref{fiolk22}), we complete the proof of Theorem \ref{MT11}.

\section{Lower bounds for the sum of eigenvalues} 

In this section, we will establish the lower bound of $\sum_{i=1}^k\lambda_i$ for any $n$ and $k$. Before we give the proof of Theorem \ref{LWBFAk}, we will give the following key estimate.

\begin{Lemma}\label{KLL}
For $d+q\geq m+2$, where $0\leq m \in \mathbb{N}$ is a constant and $q\geq2$, $s>0$, $\tau>0$, we have the following inequality:
\begin{alignat}{1}\label{KIE}
ds^{d+q}\geq (d+q)s^d\tau^{q}-q\tau^{d+q}+q\sum_{j=0}^m(j+1)s^{j}\tau^{d+q-2-j}(s-\tau)^2.
\end{alignat}
\end{Lemma}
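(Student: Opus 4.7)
The plan is to reduce the claimed inequality, via homogeneity, to a single-variable polynomial inequality, simplify the right-hand side by a telescoping identity, and then conclude by a double application of weighted AM--GM (Young's inequality) with carefully chosen weights.

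First, I would exploit homogeneity: both sides are homogeneous of degree $d+q$ in $(s,\tau)$, so dividing by $\tau^{d+q}$ and setting $x=s/\tau>0$ reduces the statement to
$$dx^{d+q}-(d+q)x^d+q\;\geq\;q\sum_{j=0}^{m}(j+1)x^j(x-1)^2.$$
The right-hand sum telescopes via the identity
$$\sum_{j=0}^{m}(j+1)x^j(x-1)^2 \;=\; (m+1)x^{m+2}-(m+2)x^{m+1}+1,$$
which is obtained by expanding $(x-1)^2=x^2-2x+1$, reindexing the three resulting sums, and observing that the interior coefficients of $x^i$ for $2\leq i\leq m$ cancel identically (a one-line induction on $m$ gives the same conclusion). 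Substituting and cancelling the constant $q$ on both sides reduces the lemma to the four-term inequality
$$dx^{d+q}+q(m+2)x^{m+1}\;\geq\;(d+q)x^d+q(m+1)x^{m+2}.$$

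The main observation is that the two sides have matching total weights $d+q(m+2)=(d+q)+q(m+1)$ and matching weight--exponent products $d(d+q)+q(m+2)(m+1)=(d+q)d+q(m+1)(m+2)$. This balance suggests splitting the left-hand side so that two weighted AM--GM's each dominate one of the right-hand terms. Concretely, I would choose
$$\alpha_1=\frac{(d+q)(d-m-1)}{d+q-m-1},\qquad \alpha_2=\frac{q(m+1)}{d+q-m-1},$$
and take $\beta_1,\beta_2$ defined by $\alpha_1+\beta_1=d+q$, $\alpha_2+\beta_2=q(m+1)$. A short algebraic check gives $\alpha_1+\alpha_2=d$ and $\beta_1+\beta_2=q(m+2)$, and the weighted exponents $(\alpha_i(d+q)+\beta_i(m+1))/(\alpha_i+\beta_i)$ simplify to exactly $d$ and $m+2$. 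Weighted AM--GM then yields
$$\alpha_1 x^{d+q}+\beta_1 x^{m+1}\geq (d+q)x^d,\qquad \alpha_2 x^{d+q}+\beta_2 x^{m+1}\geq q(m+1)x^{m+2},$$
and summing gives the reduced four-term inequality.

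The main obstacle is ensuring non-negativity of the weights $\alpha_i,\beta_i$: $\alpha_2$ requires $d+q-m-1>0$, which is precisely the stated hypothesis $d+q\geq m+2$, while $\alpha_1$ requires $d\geq m+1$ (the latter being needed to control the low-order term $x^{m+1}$ on the right). A conceptually equivalent viewpoint is that the discrete measure placing mass $d$ at the exponent $d+q$ and mass $q(m+2)$ at $m+1$ majorizes, in the convex order, the measure placing mass $d+q$ at $d$ and mass $q(m+1)$ at $m+2$; since $t\mapsto x^t$ is convex in $t$ for each fixed $x>0$, this majorization is equivalent to the target inequality.
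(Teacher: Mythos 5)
Your proposal is correct where it applies, and it takes a genuinely different route from the paper. Both proofs begin with the same reduction to the one-variable inequality in $x=s/\tau$ and the same telescoping identity $\sum_{j=0}^{m}(j+1)x^j(x-1)^2=(m+1)x^{m+2}-(m+2)x^{m+1}+1$, arriving at $dx^{d+q}+q(m+2)x^{m+1}\geq(d+q)x^d+q(m+1)x^{m+2}$. From there the paper factors out $x^{m+1}$, sets $g(x)=dx^{d+q-m-1}-(d+q)x^{d-m-1}-q(m+1)x+q(m+2)$, locates the unique sign change of $g''$ at a point $t_0<1$, and argues via convexity on $[t_0,\infty)$ using $g(1)=g'(1)=0$ together with monotonicity of $g'$ on $[0,t_0)$. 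You instead split the left side into two blocks and apply weighted AM--GM (equivalently, convexity of $t\mapsto x^t$), choosing weights $\alpha_1,\beta_1,\alpha_2,\beta_2$ so that each block dominates one of the right-hand monomials. I have checked your weight arithmetic: $\alpha_1+\alpha_2=d$, $\beta_1+\beta_2=q(m+2)$, and the two barycentric exponents come out to exactly $d$ and $m+2$, so the two AM--GMs do sum to the reduced inequality. Your approach is more elementary (no calculus) and is conceptually transparent via the convex-order picture; the paper's is a hands-on derivative computation.

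There is, however, a real issue that you notice but do not fully press on: nonnegativity of $\alpha_1=\frac{(d+q)(d-m-1)}{d+q-m-1}$ forces $d\geq m+1$, and this is \emph{not} implied by the stated hypotheses $d+q\geq m+2$, $q\geq 2$. You flag this in passing (``while $\alpha_1$ requires $d\geq m+1$''), but you should state explicitly that this is an extra assumption beyond the lemma's hypotheses. In fact, without it the lemma is false: take $d=2$, $q=4$, $m=4$ (so $d+q=m+2$ and $q\geq 2$ hold). The reduced expression becomes
\begin{equation*}
f(x)=2x^6-6x^2-20x^6+24x^5=-6x^2\left(3x^4-4x^3+1\right)=-6x^2(x-1)^2\left(3x^2+2x+1\right)\leq 0,
\end{equation*}
which is strictly negative for all $x>0$, $x\neq 1$. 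So the inequality of the lemma fails whenever $d<m+1$. This is not a defect of your argument alone: the paper's proof also implicitly needs $d\geq m+1$, since for $d<m+1$ the quantity $t_0=\bigl((d+q)(d-m-1)(d-m-2)/[d(d+q-m-1)(d+q-m-2)]\bigr)^{1/q}$ is no longer $<1$ (for $d=2,q=4,m=4$ one even divides by zero), and the subsequent convexity argument on $[t_0,\infty)$ collapses. So your proposal in fact exposes a missing hypothesis in the statement. The clean way to finish your write-up is to add ``assume additionally $d\geq m+1$'' (true in the paper's application since $d=n$ and $m\leq n-1$ would be required), and perhaps soften ``this majorization is equivalent to the target inequality'' to ``implies'' --- convex order gives the inequality for every convex test function, whereas you only need it for $x^t$, so it is a sufficient condition, not an equivalence.
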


\begin{proof}
Define  function $f(t)$ as
\begin{alignat*}{1}
f(t):=dt^{d+q}-(d+q)t^d+q-q\sum_{l=0}^m(l+1)t^l(t-1)^2,
\end{alignat*}
where $t\geq 0$.

Since (\ref{KIE}) is equivalent to
\begin{alignat*}{1}
ds^{d+q}- (d+q)s^d\tau^{q}+q\tau^{d+q}-q\sum_{l=0}^m(l+1)s^{l}\tau^{d+q-2-l}(s-\tau)^2\geq 0.
\end{alignat*}
If we define $t=\frac{s}{\tau}$, then
\begin{alignat*}{1}
\tau^{d+q}f(t)=ds^{d+q}- (d+q)s^d\tau^{q}+q\tau^{d+q}-q\sum_{l=0}^m(l+1)s^{l}\tau^{d+q-2-l}(s-\tau)^2.
\end{alignat*}
Therefore, if we get $f(t)\geq 0$ for all $t\geq 0$, then we obtain (\ref{KIE}).

Before we show $f(t)\geq 0$ for all $t\geq 0$, we should simplify $f(t)$ as
\begin{alignat*}{1}
f(t)=&dt^{d+q}-(d+q)t^d+q-q\left[(1+m)t^{2+m} +mt^{1+m}-2(1+m)t^{m+1}+1\right]\\
&-q\left[ \sum_{l=1}^{m-1}lt^{1+l}-\sum_{l=1}^{m-1} 2(1+l)t^{1+l} +\sum_{l=1}^{m-1}(l+2)t^{l+1}\right].
\end{alignat*}
On the other hand, we have
\begin{alignat*}{1}
\sum_{l=1}^{m-1}lt^{1+l}-\sum_{l=1}^{m-1} 2(1+l)t^{1+l} +\sum_{l=1}^{m-1}(l+2)t^{l+1}=0.
\end{alignat*}
Hence, we get
\begin{alignat*}{1}
f(t)=& dt^{d+q}-(d+q)t^d+q-q\left[(1+m)t^{2+m} +mt^{1+m}-2(1+m)t^{m+1}+1\right]\\
=&t^{m+1}\left( dt^{d+q-m-1}-(d+q)t^{d-m-1}-q(1+m)t +q (2+m)\right).
\end{alignat*}
Therefore, to prove $f(t)\geq 0$ is equivalent to obtain the following inequality
\begin{alignat*}{1}
dt^{d+q-m-1}-(d+q)t^{d-m-1}-q(1+m)t +q (2+m)\geq 0.
\end{alignat*}

Define $g(t)$ as
\begin{alignat*}{1}
g(t):=dt^{d+q-m-1}-(d+q)t^{d-m-1}-q(1+m)t +q (2+m).
\end{alignat*}
Differentiating $g(t)$ once gives
\begin{alignat*}{1}
g'(t)=d(d+q-m-1)t^{d+q-m-2}-(d+q)(d-m-1)t^{d-m-2}-q (1+m).
\end{alignat*}
Moreover, we differentiate the above formula again to get
\begin{alignat}{1}\label{GG}
\begin{split}
g''(t)=&d(d+q-m-1)(d+q-m-2)t^{d+q-m-3}\\
&-(d+q)(d-m-1)(d-m-2)t^{d-m-3}.
\end{split}
\end{alignat}

Since $g''(t_0)=0$, where
\begin{alignat*}{1}
t_0=\left( \frac{(d+q)(d-m-1)(d-m-2)}{d(d+q-m-1)(d+q-m-2)}   \right)^{\frac{1}{q}}<1.
\end{alignat*}
Hence, when $t\geq t_0$, we have $g''(t)\geq 0$, which implies that $g(t)$ is a convex function on $[0,\infty)$. Thus, we get
\begin{alignat*}{1}
g(t)\geq g(1)+g'(1)(t-1)=0.
\end{alignat*}
Since $g(1)=g'(1)=0$, we know $g(t)\geq 0$ for $t\in[t_0,\infty).$ In particular, $g(t_0)\geq 0$. Next, we should prove that $g(t)$ is non-negative on $[0,t_0)$. In fact, when $0\leq t\leq t_0$, we have $g''(t)\leq 0$ from (\ref{GG}).
Therefore, $g'(t)$ is decreasing on $[0,t_0)$. Consequently, we have
\begin{alignat*}{1}
g(t)\geq g(t_0)\geq 0, \,\,\, \mathrm{on} \,\,\,t\in [0,\infty).
\end{alignat*}
The above inequality implies
\begin{alignat*}{1}
ds^{d+q}&- (d+q)s^d\tau^{q}+q\tau^{d+q}-q\sum_{l=0}^m(l+1)s^{l}\tau^{d+q-2-l}(s-\tau)^2\geq 0.
\end{alignat*}
\end{proof}

The following lemma will be important to obtain our lower bound.

\begin{Lemma}\label{lskl}
Let $n\geq 2$, $\rho>0$, $A>0$ and $2l+n\geq 6$. If $\psi: [0,+\infty)\rightarrow [0,+\infty)$ is a decreasing function (and absolutely continuous) satisfying
\begin{alignat}{1}
-\rho\leq \psi'(s)\leq 0
\end{alignat}
and
\begin{alignat*}{1}
\int_0^\infty s^{n-1}\psi(s)ds=A.
\end{alignat*}
Then
 \begin{alignat*}{1}
\int_0^{\infty}& s^{2l+n-1}\psi(s)ds \\
\geq&\frac{(nA)^{\frac{2l+n}{n}}}{2l+n}\psi(0)^{-\frac{2l}{n}}+\frac{5l(nA)^{\frac{2l+n-2}{n}}}{2n(2l+n)}\psi(0)^{\frac{2n-2l+2}{n}}\rho^{-2}
\\
&-\frac{31l(nA)^{\frac{2l+n-3}{n}}}{9n(2l+n)}\psi(0)^{\frac{3n-2l+3}{n}}\rho^{-3}+\frac{5l(nA)^{\frac{2l+n-4}{n}}}{8n(2l+n)}\psi(0)^{\frac{4n-2l+4}{n}}\rho^{-4}\\
&+\frac{38l(nA)^{\frac{2l+n-6}{n}}}{25n(2l+n)}\psi(0)^{\frac{5n-2l+5}{n}}\rho^{-5}-\frac{317l(nA)^{\frac{2l+n-6}{n}}}{420n(2l+n)}\psi(0)^{\frac{6n-2l+6}{n}}\rho^{-6}.
\end{alignat*}
\end{Lemma}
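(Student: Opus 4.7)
The plan is to combine the rearrangement lemma stated in the excerpt just before Lemma~\ref{lskl} with the polynomial inequality Lemma~\ref{KLL}, in order to convert an implicit extremal bound into the explicit one claimed here. First I would invoke the rearrangement lemma: under the monotonicity, range, and moment constraints on $\psi$, the infimum of $\int_0^\infty s^{2l+n-1}\psi(s)\,ds$ is attained at the extremal profile $\Phi_{s^\ast}$, where $s^\ast$ is fixed by $\int_0^\infty s^{n-1}\Phi_{s^\ast}\,ds=A$. Direct evaluation of the two defining integrals of $\Phi_{s^\ast}$ (the same computation that yields (\ref{fiolk22})) produces
\[
\int_0^\infty s^{2l+n-1}\psi(s)\,ds \;\ge\; \frac{\psi(0)^{2l+n+1}}{(2l+n)(2l+n+1)\,\rho^{\,2l+n}}\bigl[(t+1)^{2l+n+1}-t^{2l+n+1}\bigr],
\]
where $t$ is the positive root of $(t+1)^{n+1}-t^{n+1}=Q$ with $Q:=n(n+1)A\rho^{n}/\psi(0)^{n+1}$. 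The lemma thereby reduces to an explicit lower bound for $G(Q):=(t+1)^{2l+n+1}-t^{2l+n+1}$.

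Next I would estimate $G(Q)$ via Lemma~\ref{KLL}. Writing $G(Q)=(2l+n+1)\int_t^{t+1}u^{2l+n}\,du$ and applying Lemma~\ref{KLL} pointwise to the integrand with $d=n$, $q=2l$, $m=4$ (valid because the hypothesis $2l+n\ge 6$ gives $d+q\ge m+2$), then integrating on $[t,t+1]$, using $\int_t^{t+1}u^n\,du=Q/(n+1)$, and choosing the threshold $\tau=t_1:=(Q/(n+1))^{1/n}$ (which collapses the leading two right-hand terms to $n t_1^{2l+n}$), I obtain
\[
G(Q)\;\ge\;(2l+n+1)\,t_1^{2l+n}+\frac{2l(2l+n+1)}{n}\sum_{j=0}^{4}(j+1)\,t_1^{2l+n-2-j}\,I_j(t,t_1),
\]
where $I_j(t,t_1):=\int_t^{t+1}u^{j}(u-t_1)^{2}\,du\ge 0$. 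The leading term, combined with the prefactor from Step~1 and the substitution $t_1=(nA)^{1/n}\rho/\psi(0)^{(n+1)/n}$, reproduces the principal term $(nA)^{(2l+n)/n}\psi(0)^{-2l/n}/(2l+n)$ of the stated bound.

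The five correction terms come from expanding each $I_j$. Substituting $w=u-t_1$ and setting $\delta:=t_1-t$, the mean-value theorem applied to $(t+1)^{n+1}-t^{n+1}=(n+1)\xi^n$ places $t_1\in(t,t+1)$, so $\delta\in(0,1)$; inverting the defining equation of $t$ about $t_1$ exactly as in (\ref{rofge22}) gives the asymptotic $\delta=\tfrac12+\tfrac{n-1}{24\,t_1}+\tfrac{(n-1)^{2}(2n-1)}{48\,t_1^{2}}+O(t_1^{-3})$. Writing $I_j=\int_{-\delta}^{1-\delta}(t_1+w)^{j} w^{2}\,dw$, expanding $(t_1+w)^j$ by the binomial theorem, performing the $w$-integration term-by-term, and substituting this asymptotic for $\delta$ yields each $I_j$ as a Laurent series in $t_1^{-1}$. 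Summing $\sum_{j=0}^{4}(j+1)\,t_1^{2l+n-2-j}\,I_j$ and collecting the powers $t_1^{2l+n-k}$ for $k=2,3,4,5,6$ produces explicit rational coefficients; converting back to the variables $(nA,\psi(0),\rho)$ then produces the five subleading terms with the stated constants.

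The main obstacle is the bookkeeping needed to obtain the exact rationals $\tfrac52,\,-\tfrac{31}{9},\,\tfrac58,\,\tfrac{38}{25},\,-\tfrac{317}{420}$. One must simultaneously expand $(t_1+w)^{j}w^{2}$ to matching order in $w$ and refine the asymptotic $\delta=\tfrac12+\epsilon_1 t_1^{-1}+\epsilon_2 t_1^{-2}+\cdots$ to the same order by successive substitution in the defining equation for $t$; any small slip in the coefficients $\epsilon_k$ or in one of the intermediate integrals $\int_{-\delta}^{1-\delta}w^{k+2}\,dw$ propagates into an incorrect rational constant in the final bound.
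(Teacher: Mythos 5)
The reduction in your first two steps is sound and matches the structure the paper set up in Section 2: up to the normalization $\rho=\psi(0)=1$ the profile bound reduces the lemma to lower bounding $(t+1)^{2l+n+1}-t^{2l+n+1}$ with $(t+1)^{n+1}-t^{n+1}=n(n+1)A$, which is exactly the quantity the paper attacks (writing $a$ for your $t$). Your third step, applying Lemma~\ref{KLL} with $d=n$, $q=2l$, $m=4$ to $u^{2l+n}$ on $[t,t+1]$ and collapsing the first two right-hand terms at $\tau=t_1$, is also precisely what the paper does (there, Lemma~\ref{KLL} is integrated on $[a,a+1]$ and combined with the two moment identities for $q=-\psi'$). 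The genuine divergence — and the gap — is in Step~4.

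You propose to evaluate each $I_j=\int_t^{t+1}u^j(u-t_1)^2\,du$ by plugging in a Laurent expansion $\delta=t_1-t=\tfrac12+\tfrac{n-1}{24t_1}+\cdots+O(t_1^{-3})$ taken from~(\ref{rofge22}). This cannot produce the lemma as stated, for two reasons. First, the lemma must hold for every $A>0$ (equivalently every $k\ge 1$), but an asymptotic expansion with an unquantified $O(t_1^{-3})$ error is only meaningful for $t_1$ large; in fact~(\ref{rofge22}) itself is proved in the paper only under the restrictions $k\ge\max\{\tilde k_1^{n/2},\tilde k_2^{n/2}\}$, so importing it into an unconditional lemma is circular. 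Second, the quadratic term $\tfrac{(n-1)^2(2n-1)}{48}t_1^{-2}$ you record does not occur: the symmetrized equation $(t+\tfrac12)^{n+1}-(t-\tfrac12)^{n+1}=Q$ is a polynomial in $t^2$, so its positive root expands in odd powers of $t_1^{-1}$ only, and $\delta$ has no $t_1^{-2}$ contribution. The paper avoids both difficulties by never expanding $\delta$ at all: it observes $\int_a^{a+1}(s-a)^j(s-\tau)^2\,ds\ge\tfrac{1}{(j+2)^2(j+3)}$ (the worst case $\tau-a=\tfrac{j+1}{j+2}$, attributed to~\cite{HR}), substitutes this into $\int_a^{a+1}s^i(s-\tau)^2\,ds=\sum_j\binom{i}{j}a^{i-j}\int_a^{a+1}(s-a)^j(s-\tau)^2\,ds$, and then rewrites the resulting polynomial in $a$ as an increasing polynomial $\tilde f$ of $a+1$ so that $a+1>\tau$ lets one replace $a+1$ by $\tau$. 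This yields exact, unconditional rational coefficients. To repair your proof you would have to replace the asymptotic inversion by such a uniform lower bound on each $I_j$ (or derive explicit error bounds and treat small $t_1$ separately), which is substantially more work than the paper's route.
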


\begin{proof}
We choose the function $\alpha \psi(\beta t)$ for appropriate $\alpha, \beta >0$, such that $\rho = 1$ and $\psi(0) = 1$. By \cite{M} we can also assume that $B=\int_0^\infty s^{2l+n-1}\psi(s)ds <\infty$. If we let $q(s)=-\psi^{'}(s)$ for $s\geq 0$, we have $0\leq q(s)\leq 1$ and $\int_0^\infty q(s)=\psi(0)=1.$ Moreover, integration by parts implies that
\begin{alignat*}{1}
\int_0^\infty s^{n}q(s)ds=n\int_0^\infty s^{n-1}\psi(s)ds=nA
\end{alignat*}
and
\begin{alignat*}{1}
\int_0^\infty s^{2l+n}q(s)ds\leq (2l+n)B.
\end{alignat*}
Next, let $0\leq a < +\infty$ satisfies that
\begin{alignat}{1}\label{DA}
\int_a^{a+1} s^{n}ds=\int_0^\infty s^{n}q(s)ds=nA.
\end{alignat}
By the same argument as in Lemma 1 of \cite{LY}, such real number $a$ exists. According to Lemma \ref{KLL}, we get
\begin{alignat*}{1}
ns^{2l+n}\geq (2l+n)s^n\tau^{2l}-2l\tau^{2l+n}+2l\sum_{i=0}^m(i+1)s^{i}\tau^{2l+n-2-i}(s-\tau)^2,\,\,\,s\in[a,a+1].
\end{alignat*}
Integrating the both sides of the above inequality, we get
 \begin{alignat*}{1}
n(2l+n)B\geq (2l+n)(nA)\tau^{2l}-2l\tau^{2l+n}+2l\sum_{i=0}^m(i+1)\tau^{2l+n-2-i}\int^{a+1}_as^{i}(s-\tau)^2ds.
\end{alignat*}
Let $\tau=(nA)^{\frac{1}{n}}$.  Notice that
\begin{alignat*}{1}
\int^{a+1}_as^{i}(s-\tau)^2ds=&\sum_{j=0}^iC^j_ia^{i-j}\int^{a+1}_a(s-a)^i(s-\tau)^2ds.
\end{alignat*}
According to the following inequality in \cite{HR}, we get
\begin{alignat*}{1}
2l\sum_{i=0}^m&(i+1)\tau^{2l+n-2-i}\int^{a+1}_as^{i}(s-\tau)^2ds\\
\geq& 2l\sum_{i=0}^m\sum_{j=0}^i(i+1)\tau^{2l+n-2-i} \frac{C^j_ia^{i-j}}{(j+2)^2(j+3)}.
\end{alignat*}
By direct calculation, we have
\begin{alignat*}{1}
2l&\sum_{i=0}^4\sum_{j=0}^i(i+1)\tau^{2l+n-2-i} \frac{C^j_ia^{i-j}}{(j+2)^2(j+3)}\\
=&\frac{l\tau^{2l+n-2}}{6}+\frac{al\tau^{2l+n-3}}{3}+\frac{l\tau^{2l+n-3}}{9}+\frac{a^2l\tau^{2l+n-4}}{2}+\frac{al\tau^{2l+n-4}}{3}+\frac{3l\tau^{2l+n-4}}{40}\\
&+\frac{2la^3\tau^{2l+n-5}}{3}+\frac{2la^2\tau^{2l+n-5}}{3}+\frac{3la\tau^{2l+n-5}}{10}+\frac{4l\tau^{2l+n-5}}{75}+\frac{5la^4\tau^{2l+n-6}}{6}\\
&+\frac{10la^3\tau^{2l+n-6}}{9}+\frac{3la^2\tau^{2l+n-6}}{4}+\frac{4la\tau^{2l+n-6}}{15}+\frac{5l\tau^{2l+n-6}}{126}.
\end{alignat*}
Since
\begin{alignat*}{1}
&\frac{5a^4 }{6}+\frac{10a^3 }{9}+\frac{3a^2 }{4}+\frac{4a }{15}+\frac{5 }{126}\\
&=\frac{5(a+1)^4}{6}-\frac{11(a+1)^3}{9} -\frac{7(a+1)^2}{12}+\frac{53(a+1)}{30}-\frac{317}{420}
\end{alignat*}
and
\begin{alignat*}{1}
\tilde{f}(x)=\frac{5x^4}{6}-\frac{11x^3}{9}-\frac{7x^2}{12}+\frac{53x}{30}-\frac{317}{420}
\end{alignat*}
is increasing on $[0,\infty)$,  we conclude that
\begin{alignat*}{1}
\frac{5a^4 }{6}+\frac{10a^3 }{9}+\frac{3a^2 }{4}+\frac{4a }{15}+\frac{5 }{126}
\geq \tilde{f}(\tau).
\end{alignat*}

Thus, if $2l+n\geq 4$, then we get
\begin{alignat*}{1}
2l&\sum_{i=0}^m(i+1)\tau^{2l+n-2-i}\int^{a+1}_as^{i}(s-\tau)^2ds\\
\geq&\frac{5l}{2}\tau^{2l+n-2}-\frac{31l}{9}\tau^{2l+n-3}+\frac{5l}{8}\tau^{2l+n-4}+\frac{38l}{25}\tau^{2l+n-5}-\frac{317l}{420}\tau^{2l+n-6}
\end{alignat*}
Therefore, we obtain
 \begin{alignat*}{1}
n(2l+n)B\geq&(2l+n)(nA)\tau^{2l}-2l\tau^{2l+n}+\frac{5l}{2}\tau^{2l+n-2}-\frac{31l}{9}\tau^{2l+n-3}\\
&+\frac{5l}{8}\tau^{2l+n-4}+\frac{38l}{25}\tau^{2l+n-5}-\frac{317l}{420}\tau^{2l+n-6},
\end{alignat*}
which implies
 \begin{alignat*}{1}
B\geq&\frac{1}{2l+n}(nA)^{\frac{2l+n}{n}}+\frac{5l}{2n(2l+n)}(nA)^{\frac{2l+n-2}{n}}-\frac{31l}{9n(2l+n)}(nA)^{\frac{2l+n-3}{n}}\\
&+\frac{5l}{8n(2l+n)}(nA)^{\frac{2l+n-4}{n}}+\frac{38l}{25n(2l+n)}(nA)^{\frac{2l+n-6}{n}}-\frac{317l}{420n(2l+n)}(nA)^{\frac{2l+n-6}{n}}.
\end{alignat*}
Thus, we complete the proof of our result.
\end{proof}

According to (\ref{LE}) and the similar discussion of \cite{JX}, we  complete the proof of Theorem \ref{LWBFAk}.

\begin{Remark}
Using the similar discussion of \cite{JX},  if $n\geq m$, we can obtain the following lower bound
\begin{alignat}{1}\label{LEa}
\sum_{j=1}^k \lambda_j^{l}\geq \frac{n}{(2l+n)}\omega_n^{\frac{2l}{n}}\alpha^{\frac{2l}{n}}k^{\frac{2l+n}{n}} +  f(a,l),
\end{alignat}
where $a$ is defined by (\ref{DA}),
\begin{align*}
\begin{split}
f(a,l)&=c_2 \frac{l}{(n+l)}\frac{(m+1)S_{m+3}(a)}{m+3}
\frac{\omega_n^{\frac{m+2-l}{n}}\alpha^{\frac{(m+2)n+2+m-l}{n}}}{\rho^{2+m}}k^{\frac{n+l-2-m}{n}}\\
&\qquad -\frac{l S_{m+2}(a)}{n+l}\frac{\omega_n^{\frac{m+1-l}{d}}\alpha^{\frac{n(m+1)+m+1-l}{n}}}{\rho^{1+m}}k^{\frac{n+l-1-m}{n}},
\end{split}
\end{align*}
$c_2\leq\min\{1,c_1\}$, $S_j(a)=(a+1)^j-a^j$ and
\begin{align*}
c_1=\frac{n(m+1)+m+1-l}{(m+1)[(m+2)n+2+m-l)]}\frac{\beta S_{m+2}(a)}{S_{m+3}(a)}\bigg(\frac{V}{(2\pi)^n} \bigg)^{-\frac{n+1}{n}}\omega_n^{-\frac{1}{n}} k^{\frac{1}{n}}.
\end{align*}
According to \cite{JX},  Theorem \ref{LWBFAk} is a corollary of (\ref{LEa}).

\end{Remark}

~\\
{\bf Acknowledgments}
 This work was supported  by  the National Natural Science Foundation of China, Grant No.12471051,12071424.

% ------------------------------------------------------------------------
\end{document}